\newcommand{\mynum}[2]{{\qty[scientific-notation=false, round-mode=figures,round-precision = 5, drop-zero-decimal, round-pad = false]{#1}{#2}}}
\newcommand{\myhide}[1]{{}}
\pgfplotsset{compat=1.14}
\colorlet{PlotColor1}{Spectral-A}
\colorlet{PlotColor3}{Spectral-L}
\colorlet{PlotColor2}{Spectral-O}
\colorlet{PlotColor4}{Spectral-D}
\colorlet{PlotColor5}{black}
\begin{document}

\newcommand\footnotemarkfromtitle[1]{%
	\renewcommand{\thefootnote}{\fnsymbol{footnote}}%
	\footnotemark[#1]%
	\renewcommand{\thefootnote}{\arabic{footnote}}}

\newcommand{\TheTitle}{Second-order invariant-domain preserving approximation to the multi-species Euler equations}
\newcommand{\TheAuthors}{B. Clayton, T. Dzanic, E.~J. Tovar}

\headers{Multi-species invariant-domain preserving approximation}{\TheAuthors}

\title{{\TheTitle}\thanks{Draft version, \today \funding{
			ET acknowledges the support from the U.S. Department of Energy’s Office of Applied Scientific Computing Research (ASCR) and Center for Nonlinear Studies (CNLS) at Los Alamos National Laboratory (LANL) under the Mark Kac Postdoctoral Fellowship in Applied Mathematics. ET further acknowledges the Competitive Portfolios program through ASCR. BC acknowledges the support of the Eulerian Applications Project (within the Advanced Simulation and Computing program) at LANL.
			LANL is operated by Triad National Security, LLC, for the National Nuclear Security Administration of the U.S. Department of Energy (Contract No. 89233218CNA000001). LANL release number LA-UR-25-24617.  This work was partially performed under the auspices of the U.S. Department of Energy by Lawrence Livermore National Laboratory under contract DE--AC52--07NA27344. Release number LLNL--JRNL--2005960.}}}

\author{
	Bennett Clayton\footnotemark[3]
	\and
	Tarik Dzanic\footnotemark[1]
	\and
	Eric J. Tovar\footnotemark[2]
}

\maketitle

\renewcommand{\thefootnote}{\fnsymbol{footnote}}
\footnotetext[1]{Center for Applied Scientific Computing, Lawrence Livermore National Laboratory, Livermore, CA 94551, USA}
\footnotetext[2]{Theoretical Division, Los Alamos National Laboratory, P.O. Box 1663, Los Alamos, NM, 87545, USA.}
\footnotetext[3]{X Computational Division, Los Alamos National Laboratory, P.O. Box 1663, Los Alamos, NM, 87545, USA.}

\renewcommand{\thefootnote}{\arabic{footnote}}
\begin{abstract}
	This work is concerned with constructing a second-order, invariant-domain preserving approximation of the compressible multi-species Euler equations where each species is modeled by an ideal gas equation of state. We give the full solution to the Riemann problem and derive its maximum wave speed. The maximum wave speed is used in constructing a first-order invariant-domain preserving approximation. We then extend the methodology to second-order accuracy and detail a convex limiting technique which is used for preserving the invariant domain. Finally, the numerical method is verified with analytical solutions and then validated with several benchmarks and laboratory experiments.
\end{abstract}

\begin{keywords}
	multi-species Euler, invariant-domain preserving,
	higher-order accuracy, convex limiting
\end{keywords}

\begin{AMS}
	65M60, 65M12, 35L50, 35L65, 76M10
\end{AMS}

\section{Introduction}\label{sec:introduction}
The understanding of fluid interactions between multiple miscible fluids remains a vital component of many engineering applications. For example, accurate modeling of multi-species fluid mixing is crucial in combustion research for predicting flame stability, ignition, and emissions characteristics, which directly affect engine efficiency and pollutant formation. Furthermore, in climate modeling, understanding how different atmospheric gases and moisture interact and mix is essential for predicting weather patterns and climate change impacts. Such fluid mixing phenomena are also prevalent in fields like aerospace engineering, nuclear engineering, and materials science, where accurate modeling directly impacts system performance and safety.

Accurately simulating these complex flow phenomena has proven challenging for many numerical methods, particularly for applications involving high-speed flows where compressibility effects must be accounted for. Multi-species flows at high Mach numbers introduce additional challenges due to phenomena such as shock waves, contact discontinuities, rarefaction waves, and species interfaces. Developing robust and high-fidelity numerical techniques capable of effectively capturing this behavior remains an active area of research. A variety of numerical approaches have been proposed for modeling multiphase compressible flows, with methods typically falling into the categories of: \textup{i)} mixture fraction models, where mixing is modeled by the evolution of a scalar conserved field (\ie the ``mass/mixture fraction'') without resolving individual species equations~\citep{Mulder1992}; \textup{ii)} multi-species (or multi-component) models, where the evolution of individual species densities is modeled in addition to the momentum and energy of the mixture~\citep{larrouturou2006}; and \textup{iii)} multi-fluid models, where each species is modeled as a separate fluid with individual conservation equations and interspecies interaction terms~\citep{Truesdell1984}. 

The focus of this work is on the inviscid limit of the second approach -- the multi-species Euler equations. In particular, we are concerned with conservative approximations of the compressible multi-species Euler equations, where the equation of state of each species is modeled as an ideal gas. In the context of numerical schemes for hyperbolic conservation laws, namely gas dynamics, there has been growing interest in the development of numerical schemes that are \emph{provably robust} in the sense that they guarantee the solution abides by known physical laws. For the multi-species Euler equations, this corresponds to the preservation of physical invariants such as positivity of species densities, positivity of internal energy (and consequently pressure), and adherence to an entropy inequality. Early iterations of numerical schemes which preserve positivity of species densities were shown in works such as that of~\citet{Larrouturou1991}. To the authors' knowledge, the first development of schemes which \emph{provably} preserve positivity of internal energy/pressure in multi-species flows was shown in~\citet{Shahbazi2017}. More recently, the minimum entropy principle for entropy solutions of the multi-species Euler equations was proven in \citet{gouasmi2020minimum}, which has led to the development of entropy-stable numerical schemes that adhere to an entropy inequality for such equations~\citep{Gouasmi2020, Peyvan2023, Trojak2024, Ching2024a, Ching2024b}. 

The primary novelty of this work is the construction a second-order, invariant-domain preserving approximation of the compressible multi-species Euler equations which preserves physical invariants such as the positivity of the individual species densities and internal energy/pressure as well as a local minimum principle on the mixture entropy. 
We first give the full solution to the Riemann problem and derive the local maximum wave speed -- which, to the authors' knowledge, has never been explicitly derived before.  
We  show that it is possible to estimate an upper bound on the maximum wave speed of the one-dimensional Riemann problem in the multi-species model, and then use this to construct a first-order invariant-domain preserving approximation in the manner of~\citet{guermond2016invariant}. We then describe a modified convex limiting technique~\citep{guermond2018second} to blend a nominally second-order accurate scheme with the first-order invariant-domain preserving approximation in such a way to retain both the invariant-domain preserving properties and second-order accuracy. The efficacy of the proposed scheme is shown in various numerical experiments involving multi-species flows with strong shocks. 

The remainder of this manuscript is organized as follows. We first describe the multi-species model and thermodynamic assumptions in~\Cref{sec:model_problem}. We then give the full solution to the Riemann problem and derive the maximum wave speed for the Riemann problem in~\Cref{sec:riemann_problem}. The construction of a low-order invariant-domain preserving approximation and a provisional high-order approximation is introduced in~\Cref{sec:approximation}, and the convex limiting technique is then described in~\Cref{sec:limiting}. The main result is given in Theorem~\ref{thm:idp_property}. Finally, we present the results of numerical experiments in~\Cref{sec:illustrations}.
\section{The model problem}\label{sec:model_problem}
Let $D\subset\polR^d$ be a polygonal domain where $d=\{1,2,3\}$ is the spatial dimension. We consider a mixture of $n_s\geq 2$ compressible, inviscid species occupying $D$. 
We assume that at all times there exists at least one species in a subset of $D$. 
We further assume that all species are in thermal and mechanical equilibrium and ignore any effects due to molecular diffusion. We define the conserved variable of the mixture system in question by: $\bu(\bx,t)\eqq((\alpha_1\rho_1,\dots,\alpha_{n_s}\rho_{n_s}),\bbm,E)\tr(\bx,t)$. 
Here, $\alpha_k \rho_k$ is the conserved partial density for each species where $k\in\{1,\dots, n_s\}$, $\bbm$ is the mixture momentum and $E$ is the total mechanical energy of the mixture.
We further define the following mixture quantities: density -- $\rho(\bu)\eqq\spsum\alpha_k\rho_k$; velocity -- $\bv(\bu)\eqq\bbm/\rho$; internal energy --  $\varepsilon(\bu)\eqq E - \frac{1}{2\rho}\|\bbm\|_{\ell^2}^2$; and specific internal energy -- $e(\bu)\eqq \varepsilon(\bu)/\rho(\bu)$.
The goal of this work is as follows. 
Given some initial data $\bu_0(\bx) \eqq ((\alpha_1\rho_1,\dots,\alpha_{n_s}\rho_{n_s})_0, \bbm_0, E_0)(\bx)$ at time $t_0$, we seek solutions that solve the following system in some weak sense:
\begin{subequations}\label{eq:model}
\begin{align}
    &\partial_t (\alpha_k \rho_k) + \DIV(\bv \alpha_k \rho_k) = 0, \qquad k\in\{1,\dots, n_s\},\\
    &\partial_t \bbm + \DIV(\bv \otimes \bbm + p(\bu)\polI_d) = \bm{0}, \\
    &\partial_t E + \DIV(\bv(E + p(\bu))) = 0.
\end{align}
\end{subequations}
Here, $\polI_d$ is the $d\times d$ identity matrix.
We introduce the short-hand notation for the flux of the system: $\polf(\bu)\eqq (\bv \alpha_k\rho_k, \bv\otimes\bbm +  p(\bu)\polI_d, \bv(E + p(\bu)))\tr$.
We also define the mass fractions of each species by $Y_k \eqq \alpha_k \rho_k / \rho(\bu)$ and the respective volume fractions by $\alpha_k \eqq \alpha_k \rho_k / \rho_k$ where $\rho_k$ is the material density defined through the equation of state (see Remark~\ref{rem:mat_quantities}).
We denote the mass fraction and volume fraction vector quantities by $\bY(\bu) \eqq (Y_1, \ldots, Y_{n_s})^{\sfT}$ and $\balpha(\bu) \eqq (\alpha_1, \ldots, \alpha_{n_s})^{\sfT}$, respectively.
For the sake of simplicity, we may drop the dependence on $\bu$ when discussing the mass and volume fractions.
\begin{remark}[Alternative formulation]
    It is common in the literature to formulate the system~\eqref{eq:model} in terms of the species mass fractions.
    Either in terms of the $n_s - 1$ species mass fractions with the bulk density $\rho$ (see:~\cite[Eqn.~1.1]{larrouturou2006}):
    \begin{subequations} \label{eq:mass-fraction-model1}
    \begin{align}
        &\partial_t (\rho Y_k) + \DIV(\bv \rho Y_k) = 0, \qquad k\in\{1,\dots, n_s-1\},\\
        &\partial_t \rho + \DIV(\bv \rho) = 0, \\
        &\partial_t \bbm + \DIV(\bv \otimes \bbm + p(\bu)\polI_d) = \bm{0}, \\
        &\partial_t E + \DIV(\bv(E + p(\bu))) = 0,
    \end{align}
    \end{subequations}
    or just in terms of each species mass fraction:
    \begin{subequations} \label{eq:mass-fraction-model2}
    \begin{align}
        &\partial_t (\rho Y_k) + \DIV(\bv \rho Y_k) = 0, \qquad k\in\{1,\dots, n_s\},\\
        &\partial_t \bbm + \DIV(\bv \otimes \bbm + p(\bu)\polI_d) = \bm{0}, \\
        &\partial_t E + \DIV(\bv(E + p(\bu))) = 0.
    \end{align}
    \end{subequations}
    As stated in~\citet[Remark~1]{larrouturou2006}, each of these system formulations, \eqref{eq:model}, \eqref{eq:mass-fraction-model1}, and \eqref{eq:mass-fraction-model2} are equivalent.
\end{remark}

\subsection{Thermodynamics}
We assume that each species is governed by an ideal gas equation of state. We further assume that the system is in thermal equilibrium and mechanical equilibrium. Then, the bulk pressure for the ideal mixture is given by:
\begin{equation}\label{eq:pressure}
    p(\bu) \eqq (\gamma(\bY) - 1)\varepsilon(\bu), \text{ where } \, \gamma(\bY) \eqq \frac{\spsum Y_k c_{p,k}}{\spsum Y_k c_{v,k}} = \frac{c_p(\bY)}{c_v(\bY)},
\end{equation}
denotes the mixture adiabatic index and $\{c_{p,k}\}_{k=1}^{n_s}$ and $\{c_{v,k}\}_{k=1}^{n_s}$ are the material specific heat capacities at constant pressure and volume, respectively.
Note that $\gamma$, $c_v$, and $c_p$ are technically functions of the conservative variables $\bu$; however, we write only their dependence on $\bY$ for simplicity and clarity.
We define the ratio of specific heats by: $\gamma_k\eqq c_{p,k}/c_{v,k}$ and the specific gas constant by: $r_k \eqq c_{p,k} - c_{v,k}$ for all $k \in \intset{1}{n_s}$.
Note that in ~\cite[Prop.~2]{larrouturou2006} it was shown that the system~\eqref{eq:model} is hyperbolic if $\gamma_k > 1$ holds for each species.

Since each species is governed by an ideal gas, the temperature is defined by $c_{v,k} T = e_k$ and taking the mass fraction average we see that $\spsum Y_k e_k = c_v(\bY) T$.
Similarly we can take the mass fraction average of the $p$-$T$-$\rho_k$ relation, $\frac{p}{\rho_k} = r_k T$, to find $\frac{p}{\rho} = r(\bY)T$ where we used the identity, $\spsum \frac{Y_k}{\rho_k} = \spsum \frac{\alpha_k}{\rho} = \frac{1}{\rho}$ and $r(\bY) \eqq c_p(\bY) - c_v(\bY)$.
The specific internal energy identity follows:
\[
    \spsum Y_k e_k =  c_v(\bY) T = \frac{p c_v(\bY)}{r(\bY) \rho} = \frac{p}{(\gamma(\bY) - 1)\rho} = e(\bu).
\]
In general, the temperature is recovered from:
\begin{equation} \label{eq:temperature}
    T(\bu) \eqq \frac{e(\bu)}{c_v(\bY)}.
\end{equation}
The mixture entropy is defined by $\rho s(\bu) = \spsum \alpha_k \rho_k s_k(\rho_k, e_k)$.
It is assumed that each material satisfies the Gibbs identity: $T \diff s_k = \diff e_k + p \diff \tau_k$, where $\tau_k = \rho_k^{-1}$. Hence, the specific entropy for each material, $k \in \intset{1}{n_s}$, is given by, $s_k = c_{v,k} \log(e_k / \rho_k^{\gamma_k-1}) + s_{\infty, k}$, where $s_{\infty,k}$ is some reference specific entropy. 
The mixture specific entropy is defined by:
\begin{equation} \label{eq:mix_entropy}
    s \eqq \spsum Y_k s_k = \spsum Y_k c_{v,k}\log\Big(\frac{e_k}{\rho_k^{\gamma_k - 1}}\Big) + Y_k s_{\infty,k}.
\end{equation}
Using each material Gibbs' identity and and the mass fraction average of the material specific entropies, we have the mixture Gibbs' identity:
\begin{equation}\label{eq:gibbs_mixture}
    T \diff s = \diff e + p \diff \tau - \spsum (e_k + p\tau_k - T s_k) \diff Y_k.
\end{equation}
This is used in the derivation of the solution to the Riemann problem in Section~\ref{sec:riemann_problem}.

Notice that the mixture entropy~\eqref{eq:mix_entropy} is not written in terms of the conserved variable $\bu$ since it includes material quantities $\{\rho_k\}_{k=1}^{n_s}$ and $\{e_k\}_{k=1}^{n_s}$ which are found via the equation of state of each species. 
It can be shown that the mixture entropy in terms of the conserved variables is given by (\cite[Eqn.~16a]{renac2021entropy}): 
\begin{subequations} \label{eq:mixture_entropy}
\begin{align}
    &s(\bu)\eqq c_v(\bY) \log\Big( \frac{\rho e(\bu)}{\rho^{\gamma(\bY)}}\Big) + K(\bY), \text{ where }\\ 
    &K(\bY)\eqq \spsum Y_k \left(c_{v,k} \log\Big(\frac{c_{v,k}}{c_v(\bY)} \Big( \frac{r_k}{r(\bY)} \Big)^{\gamma_k - 1} \Big) + s_{\infty,k}\right),
\end{align}
\end{subequations}
Without loss of generality, we assume that $s_{\infty,k} = 0$ for all $k\in\{1:n_s\}$.

\begin{remark}[Mixture adiabatic index]
    In the literature, it is common to only report the species adiabatic index, $\{\gamma_k\}$, rather than the specific heat capacities, $\{c_{p,k}\}$ and $\{c_{v,k}\}$.
    Discrepancies in the choice of $\{c_{p,k}\}$ and $\{c_{v,k}\}$ can result in drastically different values of the mixture adiabatic index $\gamma(\bY)$.
    Consider the following valid but extreme example. 
    Let $\gamma_1 = 1.4$ and $\gamma_2 = 1.8$.
    Then one may choose, $c_{p,1} = 1.4$, $c_{v,1} = 1$, $c_{p,2} = 1800$, and $c_{v,2} = 1000$.
    Let $Y_1 = Y_2 = 1/2$. Then, the the mixture adiabatic index becomes, $\gamma(\bY) = 1801.4 / 1001 \approx 1.8$.
    Whereas, if $c_{p,2} = 1.8$ and $c_{v,2} = 1$, then $\gamma(\bY) = 1.6$.
\end{remark}

%
\begin{remark}[Dalton's Law and material quantities] \label{rem:mat_quantities}
    Note that the assumption of mechanical equilibrium is not necessary to recover the mixture pressure~\eqref{eq:pressure}. That is to say, only thermal equilibrium and Dalton's law's are sufficient (see: Proposition~\ref{prop:daltons_pressure}).
    However, thermal equilibrium and Dalton's law alone are not enough to recover the individual material densities $\{\rho_k\}_{k=1}^{n_s}$. 
    Assuming mechanical equilibrium, the material densities are then given by $\rho_k \eqq \frac{p}{(\gamma_k - 1) e_k}$ and the volume fractions are recovered by $\alpha_k \eqq \frac{\alpha_k\rho_k}{\rho_k}$.
\end{remark}
%

%

%
%
\subsection{Model properties}\label{sec:properties}
We now discuss properties of the model~\eqref{eq:model}. 
It was shown in~\citet{gouasmi2020minimum} that the total mixture entropy $-\rho s(\bu)$ is a convex functional and proved that a local minimum principle on the mixture specific entropy similar to that of \citet{tadmor1986minimum}.
We recall the result of \cite{gouasmi2020minimum}.
Let $\bu(\bx,t)$ be an entropy solution to \eqref{eq:model}. Then the following is satisfied:
\begin{equation} \label{eq:entropy_local_minimum}
    \essinf_{\Vert \bx \Vert_{\ell^2} \leq R} s(\bu(\bx,t)) \geq \essinf_{\Vert\bx\Vert_{\ell^2} \leq R + t v_{\max}} s(\bu(\bx,0)),
\end{equation}
where $R \geq 0$ is some radius and $v_{\max}$ denotes the maximum speed of information propagation.
Furthermore, it was shown that the system~\eqref{eq:model} admits an entropy pair.
\begin{definition}[Entropy solutions]
    We define the following entropy and entropy flux:
    \begin{equation}\label{eq:entropy_pair}
        \eta(\bu)\eqq -\rho s(\bu), \qquad \bq(\bu)\eqq -\bv\rho s(\bu) .
    \end{equation}
    Then the pair $(\eta(\bu), \bq(\bu))$ is an entropy pair for the system~\eqref{eq:model} weakly satisfying $\DIV\bq = (\GRAD_\bu \eta(\bu))\tr\DIV\polf(\bu)$. 
    The solution $\bu(\bx,t)$ is an entropy solution for~\eqref{eq:model} if it is a weak solution to the system and satisfies the following inequality in the weak sense: $\partial_t \eta(\bu) + \DIV\bq(\bu) \leq 0$.
\end{definition}
Following~\citet[Def.~2.3]{guermond2016invariant}, we now define the invariant set of the system.
\begin{definition}[Invariant set]
    The following convex domain is an invariant set for the multi-species model~\eqref{eq:model}:
    \begin{equation}\label{eq:invariant_set}
        \calA\eqq\left\{\bu\in\polR^{d+1+n_s} : \alpha_k \rho_k \geq 0, k\in\intset{1}{n_s}, \, e(\bu)>0, s(\bu)\geq s_{\min}\right\},
    \end{equation}
    where $s_{\min} \eqq \essinf_{\bx \in D} s(\bu_0(\bx))$.
\end{definition}
We see that \eqref{eq:entropy_local_minimum} implies the invariant set condition: $s(\bu) \geq s_{\min}$.

The goal of this work is to construct a first-order numerical method that is invariant-domain preserving with respect to~\eqref{eq:invariant_set} and satisfies discrete entropy inequalities. 
Furthermore, we would like to extend the method to second-order accuracy while remaining invariant-domain preserving. The starting point for the first-order methodology is the work of~\citep{guermond2016invariant}. It is shown in \citep[Sec.~4]{guermond2016invariant} if one can find the maximum wave speed to the Riemann problem for a hyperbolic system, then the first-order method proposed therein is invariant-domain preserving and satisfies discrete entropy inequalities. 
\section{The Riemann problem}\label{sec:riemann_problem}
In this section, we discuss the Riemann problem for the multi-species model~\eqref{eq:model}. 
For the sake of completeness, we give the full solution to the Riemann problem. To the best of our knowledge, the elementary wave structure for the model with two species was first analyzed in~\citet{larrouturou2006}. 
Furthermore, we present the maximum wave speed in the Riemann problem which is necessary for constructing a first-order invariant-domain preserving approximation that satisfies discrete entropy inequalities. To the best of our knowledge, the maximum wave speed for the multi-species model has never been explicitly derived. 

\subsection{Set up and summary}
Let $\bn$ be any unit vector in $\polR^d$. Then the one-dimensional Riemann problem for the multi-species model projected in the direction $\bn$ is given by:
\begin{equation} \label{eq:riemann_pb}
    \partial_t \bu + \partial_x(\polf(\bu) \bn) = \bm{0}, \quad  \bu(\bx, 0) = \begin{cases}
        \bu_L, \quad \text{ if } x < 0, \\
        \bu_R, \quad \text{ if } x > 0.
    \end{cases}
\end{equation}
Here $\bu\eqq( \alpha_1\rho_1, \ldots, \alpha_{n_s} \rho_{n_s}, m, E)^\mathsf{T}$ where $m\eqq\bbm\SCAL\bn$ is the one-dimensional momentum in the direction of $\bn$. Note that the one-dimensional velocity is defined by $v\eqq\bv\SCAL\bn= m/\rho$. 
The quantity $\bu_Z \eqq ((\alpha_1\rho_1)_Z, \ldots, (\alpha_{n_s} \rho_{n_s})_Z, m_Z, E_Z)^\mathsf{T}$, $Z \in \{L, R\}$, denotes either the left or right data. We assume that $\bu_Z\in\calA$.  
It is shown in~\citep[Sec.~2.3]{larrouturou2006} and in \S\ref{app:RP} that the elementary wave structure for~\eqref{eq:riemann_pb} consists of two genuinely non-linear waves (either expansion or shock) and a linearly degenerate contact wave with multiplicity $n_s$. 
A consequence of this wave structure is that the derivation of the maximum wave speed is the same for all $n_s \geq 2$ since the only modification would be an increase in multiplicity of the contact wave. 
Furthermore, it is shown that that the mass fractions $Y_k$, for all $k\in\intset{1}{n_s}$, are constant on each side of the contact wave (see Lemma~\ref{lem:mass_frac}). Consequently, $\gamma(\bY)$ is also constant on each side of the contact. This property is fundamental when deriving the maximum wave speed formula below. 
We now provide the novel formula.
\begin{proposition}[Maximum wave speed] \label{prop:wave_speeds}
Let $Z \in \{L, R\}$. Assume $p^{*}$ is a solution to $\varphi(p)=0$ where 
\begin{subequations}\label{eq:f_function}
\begin{align}
    &\varphi(p)\eqq f_L(p) + f_R(p) + v_R - v_L, \\
    &f_Z(p) = \begin{cases}
        (p - p_Z) \sqrt{\frac{A_Z}{p + B_Z}}, &\quad \text{ if } p > p_Z, \\ 
        \frac{2c_Z}{\gamma_Z - 1} \Big( \big( \frac{p}{p_Z} \big)^{\frac{\gamma_Z - 1}{2\gamma_Z}} - 1 \Big), &\quad \text{ if } p \leq p_Z.
    \end{cases}
\end{align}
\end{subequations}
and $A_Z = \frac{2}{(\gamma_Z + 1)\rho_Z}$, $B_Z = \frac{\gamma_Z - 1}{\gamma_Z + 1} p_Z$, $c_Z = \sqrt{\frac{\gamma_Z p_Z}{\rho_Z}}$.
Then the maximum wave speed is given by
\begin{equation} \label{eq:max_wave_speed}
    \lambda_{\max}(\bu_L, \bu_R, \bn_{LR}) \eqq \max(|\lambda_L(p^*)|, \lambda_R(p^*)|),
\end{equation}
where
\begin{align*}
    \lambda_L(p^*) \eqq v_L - c_L \sqrt{1 + \frac{\gamma_L + 1}{2\gamma_L}\max\Big( \frac{p^* - p_L}{p_L}, 0 \Big)}, \\
    \lambda_R(p^*) \eqq v_R + c_R \sqrt{1 + \frac{\gamma_R + 1}{2\gamma_R}\max\Big( \frac{p^* - p_R}{p_R}, 0 \Big)}.
\end{align*}
\end{proposition}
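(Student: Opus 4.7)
The plan is to reduce this to the single-species Riemann analysis on each side of the contact by exploiting the structural fact, already provided in Lemma~\ref{lem:mass_frac} and the appendix, that the mass fractions $Y_k$ (and therefore $\gamma(\bY)$) are constant across each genuinely nonlinear wave. Concretely, the 2-wave is the only carrier of species information: to its left all mass fractions equal $(Y_k)_L$, so $\gamma = \gamma_L$, and to its right all mass fractions equal $(Y_k)_R$, so $\gamma = \gamma_R$. The mixture Gibbs identity \eqref{eq:gibbs_mixture} then collapses to the single-species Gibbs identity $T\diff s = \diff e + p\diff\tau$ across the 1-wave and the 3-wave. This means that, across either genuinely nonlinear wave, the equation of state $p = (\gamma - 1)\rho e$ with a fixed $\gamma$ behaves exactly as in the single-species compressible Euler case with that specific $\gamma$, and standard tools apply locally.

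With this reduction in hand, I would derive the two wave curves in a $(p,v)$ diagram exactly as in the classical analysis (e.g.\ Toro). For the left wave: if $p^* > p_L$ it is a 1-shock, and the Rankine--Hugoniot relations (for a fixed $\gamma_L$) combined with the Hugoniot density relation $\rho^*/\rho_L = \frac{(\gamma_L + 1)p^* + (\gamma_L - 1)p_L}{(\gamma_L - 1)p^* + (\gamma_L + 1)p_L}$ give $v^* = v_L - (p^* - p_L)\sqrt{A_L/(p^* + B_L)}$; if $p^* \leq p_L$ it is a 1-rarefaction, and the Riemann invariant $v + \tfrac{2c}{\gamma_L - 1}$ together with the isentropic relation $p/\rho^{\gamma_L}=\mathrm{const}$ gives $v^* = v_L - \tfrac{2c_L}{\gamma_L - 1}\bigl((p^*/p_L)^{(\gamma_L - 1)/(2\gamma_L)} - 1\bigr)$. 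In both cases this yields $v^* = v_L - f_L(p^*)$. The symmetric argument on the right gives $v^* = v_R + f_R(p^*)$. Enforcing continuity of $p$ and $v$ across the 2-wave (which is standard since pressure and normal velocity are Riemann invariants of the linearly degenerate field, with multiplicity $n_s$) then produces $\varphi(p^*) = 0$.

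Given $p^*$, the maximum wave speed comes from reading off the outermost characteristic speeds. For the 1-wave: if $p^* > p_L$, the Rankine--Hugoniot speed is $\lambda_L = v_L - c_L\sqrt{1 + \tfrac{\gamma_L + 1}{2\gamma_L}\tfrac{p^* - p_L}{p_L}}$; if $p^* \leq p_L$, the head of the rarefaction fan travels at $\lambda_L = v_L - c_L$. Both cases combine into the $\max(\cdot,0)$ formulation in the statement. The analogous calculation for the 3-wave gives $\lambda_R(p^*)$, and $\lambda_{\max}$ is the larger absolute value. Monotonicity of $f_L$ and $f_R$ on $(0,\infty)$ guarantees existence and uniqueness of $p^*$ whenever the non-vacuum condition $v_R - v_L < f_L(0) + f_R(0)$ holds.

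The main obstacle is conceptual rather than computational: one must justify rigorously that the standard single-species Hugoniot and Riemann-invariant machinery really is applicable on each side of the 2-wave despite the mixture character of the system. This boils down to verifying (i) that the reduced Gibbs identity holds when $\diff Y_k \equiv 0$, so that the 1- and 3-Riemann invariants coincide with the classical ones built from a fixed $\gamma$, and (ii) that the Rankine--Hugoniot jumps for $(\alpha_k\rho_k)$ force $[Y_k]=0$ across a genuinely nonlinear shock, so that $\gamma_L$ (resp.\ $\gamma_R$) is indeed the correct adiabatic index to use in the Hugoniot locus. Both ingredients are already established by Lemma~\ref{lem:mass_frac} and the derivations in \S\ref{app:RP}; once invoked, the remainder of the proof is a careful bookkeeping of the single-species formulas with $\gamma$ replaced by $\gamma_L$ or $\gamma_R$ on the appropriate side.
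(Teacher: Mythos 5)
Your proposal is correct and follows essentially the same route as the paper: reduce to a ``two-gamma'' single-species Riemann problem by using the constancy of the mass fractions (hence of $\gamma(\bY)$) across each genuinely nonlinear wave (Lemma~\ref{lem:mass_frac} and the reduced Gibbs identity), then apply the classical shock/rarefaction wave-curve construction with $\gamma_L$ on the left and $\gamma_R$ on the right to obtain $\varphi(p^*)=0$ and read off the extreme wave speeds. The only difference is presentational: the paper delegates the single-species bookkeeping and the wave-ordering to the cited references, whereas you sketch it explicitly.
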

\subsection{Elementary wave structure}\label{app:RP}
We now give a brief overview of the elementary wave structure.
To simplify the elementary wave analysis, we introduce the following mapping: $\bu \mapsto \btheta(\bu)$ where
\begin{equation}
    \btheta(\bu) \eqq \Big( \frac{\alpha_1\rho_1}{\rho(\bu)}, \ldots, \frac{\alpha_{n_s - 1} \rho_{n_s - 1}}{\rho(\bu)}, \rho(\bu), \frac{m}{\rho(\bu)}, E - \frac{m^2}{2\rho(\bu)} \Big)^\mathsf{T}.
\end{equation}
Using the mass fraction notation, we have that
$\btheta = (Y_1, \ldots, Y_{n_s-1}, \rho, v, \rho e)^{\mathsf{T}}$.
Note that the mapping $\bu \mapsto \btheta(\bu)$ is a smooth diffeomorphism. It can be shown that the eigenvalues of the Jacobian matrix, $D_\bu (\polf(\bu)\bn)$, correspond exactly to the eigenvalues of the matrix $\polB(\btheta) \eqq (D_\btheta \bu(\btheta))^{-1} D(\polf(\bu(\btheta))\bn)  D_{\btheta} \bu(\btheta)$ (see:~\cite[Chpt. II, Sec.~2.1.1]{godlewski2013numerical}).
That is, a smooth diffeomorphic change of variables does not affect the eigenvalues of the Jacobian matrix.
In short, the conservation law~\eqref{eq:riemann_pb} can be written as:
\begin{subequations}
\begin{align}
    \partial_t Y_i + v \partial_x Y_i &= 0, \quad \text{ for } i \in \intset{1}{n_s-1}, \\
    \partial_t \rho + v \partial_x \rho + \rho \partial_x v &= 0, \\
    \partial_t v + v \partial_x v + \rho^{-1} \partial_x p &= 0, \\
    \partial_t(\rho e) + v \partial_x (\rho e) + (\rho e + p) \partial_x v &= 0.
\end{align}
\end{subequations}
It can be shown that the transformed Jacobian matrix is:
\begin{equation}
    \polB(\btheta) = \begin{bmatrix}
        v \polI_{n_s-1} & \mathbf{0} & \mathbf{0} & \mathbf{0} \\
        \mathbf{0}\tr & v & \rho & 0 \\
        \rho^{-1}(D_{\mathbf{Y}} p)\tr & 0 & v & \frac{\gamma - 1}{\rho} \\
        \mathbf{0}\tr & 0 & \rho e + P & v
    \end{bmatrix},
\end{equation}
where $(D_\mathbf{Y} p)\tr = (\pdv{p}{Y_1}, \ldots, \pdv{p}{Y_{n_s-1}})$.
The eigenvalues are given by $\lambda_1(\btheta) = v - c$, $\lambda_{n_s + 2} = v + c$, and $\lambda_i = v$ for $i \in \intset{2}{n_s + 1}$, where $c = \sqrt{\gamma p / \rho}$.
The corresponding eigenvectors (as functions of $\btheta)$ are given by:
\begin{align}
    \br_1 = \begin{pmatrix}
        \mathbf{0}_{n_s-1} \\ \frac{\gamma - 1}{c} \\ -\frac{\gamma - 1}{\rho} \\ c
    \end{pmatrix},
    \,
    \br_i = \begin{pmatrix}
        \boldsymbol{\sfe}_{i-1} \\ 0 \\ 0 \\ 
        -\frac{1}{\gamma-1} \pdv{P}{Y_{i-1}}
    \end{pmatrix}, 
    \,
    \br_{n_s+1} = \begin{pmatrix}
        \mathbf{0}_{n_s-1} \\ 1 \\ 0 \\ 0
    \end{pmatrix},
    \,
    \br_{n_s+2} = \begin{pmatrix}
        \mathbf{0}_{n_s-1} \\ 
        \frac{\gamma-1}{c} \\ \frac{\gamma-1}{\rho} \\ c
    \end{pmatrix}
\end{align}
for $i \in \intset{2}{n_s}$ where $\{\boldsymbol{\sfe}_i\}_{i\in\intset{1}{n_s-1}}$ is the canonical basis for $\Real^{n_s-1}$.
\begin{lemma}
    The 1-wave and the $(n_s+2)$-wave are genuinely nonlinear and the $i$-waves for $i \in \intset{1}{n_s+1}$ are linearly degenerate.
\end{lemma}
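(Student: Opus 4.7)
The plan is to invoke the standard definitions from the theory of hyperbolic systems: the $i$-wave is genuinely nonlinear if $(\GRAD_\btheta\lambda_i)\SCAL\br_i\neq 0$ everywhere on the relevant state space, and linearly degenerate if this inner product vanishes identically. The verification then splits into two easy cases (the contact family) and two symmetric nontrivial cases (the acoustic families).

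For the contact family $i\in\intset{2}{n_s+1}$, I would argue directly. The eigenvalue $\lambda_i(\btheta)=v$ depends only on the velocity component of $\btheta$, so $\GRAD_\btheta\lambda_i = (\mathbf{0}_{n_s-1}, 0, 1, 0)^\mathsf{T}$. Inspection of the eigenvectors $\br_i$ for $i\in\intset{2}{n_s}$ and of $\br_{n_s+1}$ shows that the $v$-slot is zero in every case (these eigenvectors live entirely in the $Y_{i-1}$/$\rho e$ or $\rho$ directions). Hence $(\GRAD_\btheta\lambda_i)\SCAL\br_i\equiv 0$, proving linear degeneracy.

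For the 1-wave with $\lambda_1=v-c$ and $c=\sqrt{\gamma(\bY)p/\rho}$, I would first rewrite $c$ purely in $\btheta$-variables using $p=(\gamma(\bY)-1)\rho e$, so that $c^2 = \gamma(\gamma-1)\varepsilon/\rho$ where $\varepsilon\eqq\rho e$. A short calculation then yields
\[
\partial_\rho c = -\frac{c}{2\rho}, \qquad \partial_\varepsilon c = \frac{c^2}{2c\,\varepsilon} = \frac{\gamma(\gamma-1)}{2c\rho},\qquad \partial_v c = 0.
\]
The $\bY$-derivatives of $c$ are irrelevant because the first $n_s-1$ components of $\br_1$ vanish. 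Taking the inner product with $\br_1=(\mathbf{0}_{n_s-1},\,\tfrac{\gamma-1}{c},\,-\tfrac{\gamma-1}{\rho},\,c)^\mathsf{T}$ gives
\[
(\GRAD_\btheta\lambda_1)\SCAL\br_1 = -\partial_\rho c\cdot\tfrac{\gamma-1}{c} + 1\cdot\bigl(-\tfrac{\gamma-1}{\rho}\bigr) - \partial_\varepsilon c\cdot c = -\frac{(\gamma-1)(\gamma+1)}{2\rho},
\]
which is nonzero since $\gamma(\bY)>1$ (guaranteed because every $\gamma_k>1$ in the hyperbolicity assumption, and $\gamma(\bY)$ is a convex-type combination of the $\gamma_k$'s bounded below by $1$). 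The $(n_s+2)$-wave is handled identically with the sign of the $\rho$- and $v$-components of $\br_{n_s+2}$ flipped, producing the opposite nonzero value $+\tfrac{(\gamma-1)(\gamma+1)}{2\rho}$.

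The one subtle point, and the only place where I expect to pause, is making sure the multi-species dependence $\gamma=\gamma(\bY)$ does not spoil the computation for the acoustic families: naively, $\partial_{Y_j}c$ is nonzero, so one might worry about extra terms in the inner product. The crucial observation is that $\br_1$ and $\br_{n_s+2}$ have vanishing $\bY$-components by construction, so these derivatives never enter; the identity then reduces to the single-species Euler computation with $\gamma$ treated as a parameter. A final one-line remark should flag that the statement as written includes $i=1$ in the linearly degenerate list, which is a typo — the 1-wave is genuinely nonlinear by the computation above.
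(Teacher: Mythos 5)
Your proof is correct and follows essentially the same route as the paper: compute $(\GRAD_\btheta\lambda_i)\SCAL\br_i$ in the $\btheta$-variables, obtaining $-\tfrac{\gamma^2-1}{2\rho}\neq 0$ for the acoustic families (with the $\bY$-derivatives of $c$ dropping out because $\br_1,\br_{n_s+2}$ have zero $\bY$-components) and an identically vanishing product for the contact family. Your observation that the stated index range $\intset{1}{n_s+1}$ for the linearly degenerate waves should read $\intset{2}{n_s+1}$ is also correct.
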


\begin{proof}
    The wave structure is unaffected by the change of variables as shown in \cite[Chpt II, Sec. 2.1.1]{godlewski2013numerical}.
    The derivative of $\lambda_1$ is \begin{equation}
        D_\btheta \lambda_1(\btheta) = \begin{pmatrix}
            -\frac{(2\gamma - 1)c}{2\gamma(\gamma - 1)} \Big( \frac{\gamma_1 - \gamma}{c_v / c_{v,1}} \Big) \\
            \vdots \\
            -\frac{(2\gamma-1)c}{2\gamma(\gamma - 1)} \Big( \frac{\gamma_{n_s-1} - \gamma}{c_v / c_{v,{n_s-1}}} \Big) \\
            \frac{c}{2\rho} \\
            1 \\
            -\frac{c}{2\rho e}
        \end{pmatrix}.
    \end{equation}
    Thus, $D_\btheta \lambda_1(\btheta) \cdot \br_1(\btheta) = \frac{\gamma-1}{2\rho} - \frac{\gamma-1}{\rho} - \frac{\gamma(\gamma-1)}{2\rho} = -\frac{\gamma^2 - 1}{2\rho} < 0$.
    A similar result holds for $\lambda_{n_s+2}$.
    Then note that $D_{\btheta}\lambda_i(\btheta) = (\mathbf{0}_{n_s}, 1, 0)^{\sfT}$ for all $i \in \intset{2}{n_s+1}$, hence $D_\btheta\lambda_i(\btheta) \cdot \br_i(\btheta) = 0$.
    This completes the proof.
\end{proof}

\subsection{Solution to the Riemann problem}
We now give the full solution to the Riemann problem.
We first show that the mass fractions are constant across each nonlinear wave.
\begin{lemma}[Mass fractions] \label{lem:mass_frac}
    Let $Y_k(x,t)$ be the mass fraction weak solution to the Riemann problem \eqref{eq:riemann_pb} for each $k \in \intset{1}{n_s}$.
    Then $Y_k(x,t) = Y_{k,L}$ for all $x < v^*t$ and $Y_k(x,t) = Y_{k,R}$ for all $x > v^*t$ for all $t > 0$ where $v^*$ denotes the speed of the contact.
\end{lemma}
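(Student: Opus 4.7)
The plan is to reduce the statement to showing that each mass fraction $Y_k$ is unchanged across both of the genuinely nonlinear waves (the $1$-wave and the $(n_s+2)$-wave). Once this is established, the wave ordering places the two nonlinear waves respectively to the left and right of the contact at speed $v^*$, so the piecewise form in the conclusion follows directly, since the contact is the only remaining wave on which $Y_k$ is permitted to jump.

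For rarefaction fans, I would appeal directly to the eigenvector formulas already derived in Section~\ref{app:RP}. Both $\br_1$ and $\br_{n_s+2}$ have the block $\mathbf{0}_{n_s-1}$ in the $(Y_1,\dots,Y_{n_s-1})$ components of $\btheta$, so any integral curve of either vector field keeps $Y_1,\dots,Y_{n_s-1}$ constant. Since $\sum_{k=1}^{n_s} Y_k \equiv 1$, the remaining mass fraction $Y_{n_s}$ is also preserved across the fan.

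For shock segments I would apply the Rankine--Hugoniot conditions to the $k$-th species equation and to the total-density equation obtained by summing over $k$. With shock speed $\sigma$ and states indexed by $1,2$ on either side, these read
\begin{align*}
    (\sigma - v_1)(\alpha_k\rho_k)_1 &= (\sigma - v_2)(\alpha_k\rho_k)_2, \\
    (\sigma - v_1)\rho_1 &= (\sigma - v_2)\rho_2.
\end{align*}
For any nontrivial shock in a genuinely nonlinear field the velocity must jump ($v_1 \neq v_2$), and the second identity then forces $\sigma - v_j \neq 0$ on both sides. Dividing the two identities gives $Y_{k,1} = Y_{k,2}$ for every $k \in \intset{1}{n_s}$.

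Combining the two cases, both nonlinear waves preserve every $Y_k$, so $Y_k(x,t) = Y_{k,L}$ throughout $x < v^* t$ and $Y_k(x,t) = Y_{k,R}$ throughout $x > v^* t$. The only mildly delicate point is the non-vanishing of $\sigma - v_j$ on a shock; as noted, this is automatic once degenerate waves are excluded, so no substantive obstacle is expected.
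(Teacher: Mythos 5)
Your proposal is correct and follows essentially the same route as the paper: the shock case is handled by exactly the same Rankine--Hugoniot division of the species relation by the mass relation, and the rarefaction case (which you phrase via the vanishing $Y$-components of $\br_1$ and $\br_{n_s+2}$ rather than the transport equation $\partial_t Y_k + v\,\partial_x Y_k = 0$) rests on the same characteristic structure. Your explicit remark that $\sigma - v_j \neq 0$ across a genuine shock is a point the paper's proof uses implicitly, so no gap there.
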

\begin{proof}
Assume that the left state, $L$, is connected to the state across the 1-wave by a shock wave.
Then the multi-species system satisfies the Rankine-Hugoniot relations: $S_L(\bu_L - \bu_{*L}) = (\polf(\bu_L) - \polf(\bu_{*L}))\bn$.
In particular, $S_L(\rho_L Y_{i,L} - \rho_{*L} Y_{k,*L}) = (\rho_L v_L Y_{k,L} - \rho_{*L} v_{*L} Y_{k,*L})$.
This identity can be rewritten as, $\rho_L Y_{k,L}(S_L - v_L) = \rho_{*L} Y_{k,*L}(S_L - v_{*L})$.
From the conservation of mass, we have $\rho_L(S_L - v_L) = \rho_{*L} (S_L - v_{*L})$ and therefore we conclude that $Y_{k,L} = Y_{k,*L}$ for all $k \in \intset{1}{n_s}$.

Assume now that the left state is connected across a 1-wave by an expansion wave.
Note that $Y_k$ satisfies $\frac{D Y_k}{Dt} := \partial_t Y_k + v_k \partial_x Y_k = 0$ hence $Y_k$ is constant across an expansion wave.
This same reasoning can be applied across the right wave.
\end{proof}

Since each $Y_k$ is constant across the left and right waves, we conclude that $\gamma(\bY)$ is constant across the left and right waves.
Therefore, left of the contact ($x/t < v_*$), the pressure law obeys $p = (\gamma_L - 1) \rho e$.
Across the right wave the pressure law is given by $p = (\gamma_R - 1)\rho e$.
Furthermore, from the Gibbs mixture identity~\eqref{eq:gibbs_mixture} the differential relationship on the left and right waves is:
\begin{equation} \label{eq:gibbs_identity_on_wave}
    T\diff s = \diff e + p \diff \tau.
\end{equation}
Hence, across the genuinely nonlinear waves, the pressure is $p = -\partial_\tau e(\tau, s)$.
Furthermore, it can be shown that the specific entropy, $s$, satisfies $\partial_t s + \bv \cdot \nabla s = 0$ as indicated in \cite[Sec.~2]{gouasmi2020minimum}.
Hence, $s$ is constant across expansions.
From \eqref{eq:mixture_entropy}, the specific internal energy as a function of $s$, $\tau$, and $\bY$ is,
\begin{equation}
    e(\bu) \eqq \tau(\bu)^{-(\gamma(\bY) - 1)} \exp\Big( \frac{s(\bu) - K(\bY)}{c_v(\bY)} \Big).
\end{equation}
From the differential relation \eqref{eq:gibbs_identity_on_wave}, the pressure on an expansion wave (as a function of $\rho$) is,
\begin{equation}
    p = (\gamma(\bY_Z) - 1) \rho^{\gamma(\bY_Z)}\exp\Big(\frac{1}{c_v(\bY_Z)}\big(s_Z - K(\bY_Z)\big)\Big) = C_Z \rho^{\gamma_Z},
\end{equation}
for $Z \in \{L, R\}$.
The constant $C_Z$, can be computed simply by $C_Z = p_Z / \rho_Z^{\gamma_Z}$ since the expansion waves begins from the constant state.
We also have the same structure for the Riemann invariants as in the single material case, in particular, the $1$-Riemann invariant is $w_1(\btheta) = v + \frac{2c}{\gamma - 1}$ and the $(n_s+2)$-Riemann invariant is $w_{n_s+2}(\btheta) = v - \frac{2c}{\gamma-1}$.

Since $\gamma = \gamma_L$ to the left of the contact and $\gamma = \gamma_R$ to the right of the contact, the pressure law is given by $p = (\gamma_Z - 1) \rho e$ for $Z \in \{L, R\}$ on each side of the contact, respectively.
The solution to this Riemann problem can be computed exactly as done in \citet[Sec.~4]{clayton2022invariant} for $\rho$, $p$, and $v$.
(A ``two-gamma'' approximation was used in~\cite{clayton2022invariant} for interpolating general equations of state, inspired by \citet{abgrall2001computations}; the connection to the multi-species model~\eqref{eq:model} was not yet made at the time of the publication.)
Once $\rho$, $p$, and $v$ are known, the partial densities are computed by $\alpha_k \rho_k = \rho Y_k$, since the mass fractions, $\{Y_k\}$, are piecewise constant.
The fundamental methodology of constructing the solution can also be found in \citet[Sec.~4]{toro2013riemann}, \citet[Chpt.~III, Sec.~3]{godlewski2013numerical}, and \citet{Lax_1957}.
For brevity, we simply present the result.

The pressure in the star domain is determined by solving the following nonlinear equation:
\begin{equation} \label{eq:varphi=0}
    \varphi(p) \eqq f_L(p) + f_R(p) + v_R - v_L = 0,
\end{equation}
where 
\begin{equation}
    f_Z(p) = \begin{cases}
        (p - p_Z) \sqrt{\frac{A_Z}{p + B_Z}}, &\quad \text{ if } p > p_Z, \\ 
        \frac{2c_Z}{\gamma_Z - 1} \Big( \big( \frac{p}{p_Z} \big)^{\frac{\gamma_Z - 1}{2\gamma_Z}} - 1 \Big), &\quad \text{ if } p \leq p_Z,
    \end{cases}
\end{equation}
for $Z \in \{L, R\}$ and where $A_Z = \frac{2}{(\gamma_Z + 1)\rho_Z}$, $B_Z = \frac{\gamma_Z - 1}{\gamma_Z + 1} p_Z$, and $c_Z = \sqrt{\frac{\gamma_Z p_Z}{\rho_Z}}$.
The details for this derivation can be found in  and are independent of the number of materials present.
Let $\bc(x,t) \eqq (\rho(x,t), v(x,t), p(x,t))^{\sfT}$ be the primitive state of the solution to the Riemann problem for the multi-component Euler equations.
Define the self-similar parameter $\xi \eqq \frac{x}{t}$.
Then the weak entropy solution to the Riemann problem is given by:
\begin{equation} \label{eq:riemann_solution}
    \bc(x,t) \eqq \begin{cases}
        \bc_L, &\quad \text{ if } \xi < \lambda_L^{-}(p^*), \\
        \bc_{LL}(\xi), &\quad \text{ if } \lambda_L^{-}(p^*) \leq \xi < \lambda^+_L(p^*), \\
        \bc^*_L, &\quad \text{ if } \lambda^+_L(p^*) \leq \xi < v^*, \\
        \bc^*_R, &\quad \text{ if } v^* \leq \xi < \lambda^-_R(p^*) \\ \bc_{RR}(\xi), &\quad \text{ if } \lambda_R^{-}(p^*) \leq \xi < \lambda^+_R(p^*), \\
        \bc_R, &\quad \text{ if } \lambda^+_R(p^*) \leq \xi
    \end{cases}
\end{equation}
where 
\begin{subequations}
\begin{align}
    \bc_{LL}(\xi) &= \Big( \rho_L \Big( \frac{2}{\gamma_L + 1} + \frac{(\gamma_L - 1)(v_L - \xi)}{(\gamma_L + 1) c_L} \Big)^{\frac{2}{\gamma_L - 1}}, v_L - f_L(p(\xi)), p_L(\xi)\Big)\tr \\
    \bc_L^* &= \begin{cases}
        \bc_{LL}(\lambda^+_L), \quad &\text{ if } p^* < p_L \\
        ( \rho^*_L, v^*, p^* )\tr \quad &\text{ if } p^* \geq p_L \\
    \end{cases} \\
    \bc_{RR}(\xi) &= \Big( \rho_R \Big( \frac{2}{\gamma_R + 1} - \frac{(\gamma_R - 1)(v_R - \xi)}{(\gamma_R + 1) c_R} \Big)^{\frac{2}{\gamma_R - 1}}, v_R + f_R(p(\xi)), p_R(\xi)\Big)\tr \\
    \bc_R^* &= \begin{cases}
        \bc_{RR}(\lambda^-_R), \quad &\text{ if } p^* < p_R \\
        ( \rho^*_R, v^*, p^* )\tr \quad &\text{ if } p^* \geq p_R \\
    \end{cases}
\end{align}
\end{subequations}
where $\rho_L(\xi) = \rho_L\big( \frac{2}{\gamma_L + 1} + \frac{\gamma_L - 1}{(\gamma_L + 1) c_L}(v_L - \xi) \big)^{\frac{2}{\gamma_L - 1}}$, 
$\rho_R(\xi) = \rho_R\big( \frac{2}{\gamma_R + 1} - \frac{\gamma_R - 1}{(\gamma_R + 1) c_R}(v_R - \xi) \big)^{\frac{2}{\gamma_R - 1}}$,
$p_L(\xi) \eqq C_L \rho_L(\xi)^{\gamma_L}$, and $p_R(\xi) \eqq C_R \rho_R(\xi)^{\gamma_R}$.
Across a shock, the density is:
\begin{equation}
    \rho^*_Z = \frac{\rho_Z \big( \frac{p^*}{p_Z} + \frac{\gamma_Z - 1}{\gamma_Z + 1} \big)}{\frac{\gamma_Z - 1}{\gamma_Z + 1} \frac{p^*}{p_Z} + 1},
\end{equation}
for $Z \in \{L, R\}$ where $p^*$ solves \eqref{eq:varphi=0} and $v^* = v_L - f_L(p^*) = v_R + f_R(p^*)$.
The wave speeds are given by:
\begin{align*}
    \lambda^-_L(p^*) \eqq v_L - c_L \sqrt{1 + \frac{\gamma_L + 1}{2\gamma_L}\max\Big( \frac{p^* - p_L}{p_L}, 0 \Big)}, \\
    \lambda^+_L(p^*) \eqq \begin{cases}
        v_L - f_L(p^*) - c_L \big( \frac{p^*}{p_L} \big)^{\frac{\gamma_L - 1}{2\gamma_L}}, &\quad \text{ if } p^* < p_L \\
        \lambda^-_L(p^*), &\quad \text{ if } p^* \geq p_L,
    \end{cases} \\
    \lambda^+_R(p^*) \eqq v_R + c_R \sqrt{1 + \frac{\gamma_R + 1}{2\gamma_R}\max\Big( \frac{p^* - p_R}{p_R}, 0 \Big)}, \\
    \lambda^-_R(p^*) \eqq \begin{cases}
        v_R + f_R(p^*) + c_R \big( \frac{p^*}{p_R} \big)^{\frac{\gamma_R - 1}{2\gamma_R}}, &\quad \text{ if } p^* < p_R, \\
        \lambda^+_R(p^*), &\quad \text{ if } p^* \geq p_R.
    \end{cases}
\end{align*}
From \citet{clayton2022invariant}, the waves are well ordered from the following lemma.
\begin{lemma}
For $\gamma_L, \gamma_R > 1$ and $c_L, c_R > 0$, we have that,
\begin{equation*}
    \lambda^-_L(p^*) \leq \lambda^+_L(p^*) \leq v^*_L \leq v^*_R \leq \lambda^-_R(p^*) \leq \lambda^+_R(p^*).
\end{equation*}
\end{lemma}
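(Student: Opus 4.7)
The plan is to decompose the six-inequality chain into three independent claims: the left block $\lambda_L^-(p^*) \leq \lambda_L^+(p^*) \leq v^*_L$, the right block $v^*_R \leq \lambda_R^-(p^*) \leq \lambda_R^+(p^*)$, and the middle equality $v^*_L = v^*_R$. The middle link is immediate from the construction in \eqref{eq:riemann_solution}: the value $p^*$ is defined by $\varphi(p^*) = 0$ in \eqref{eq:varphi=0}, so $v_L - f_L(p^*) = v_R + f_R(p^*) =: v^*$, and inspecting the piecewise definitions of $\bc^*_L$ and $\bc^*_R$ shows that the velocity component equals $v^*$ in either case (in the rarefaction case one uses that $\bc_{LL}(\lambda_L^+)$ has velocity $v_L - f_L(p^*)$ at the tail of the fan, and similarly for the right). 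Hence $v^*_L = v^*_R = v^*$ identically.

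For the left block I would split on whether the $1$-wave is a shock ($p^* \geq p_L$) or a rarefaction ($p^* < p_L$). In the shock case the piecewise definitions give $\lambda_L^+ = \lambda_L^-$, so only $\lambda_L^- \leq v^*$ requires proof. Since $p^*/p_L \geq 1$, the square-root factor in $\lambda_L^-$ is at least one, hence $\lambda_L^- \leq v_L - c_L < v_L$. Combining with the Rankine-Hugoniot mass jump $\rho_L(\lambda_L^- - v_L) = \rho_L^*(\lambda_L^- - v^*)$ across the left shock, I would conclude
\[
    \lambda_L^- - v^* = \frac{\rho_L}{\rho_L^*}(\lambda_L^- - v_L) < 0,
\]
using $\rho_L^* > \rho_L > 0$, which follows from the Hugoniot density identity since the ratio $(x + \mu_L)/(\mu_L x + 1)$ is monotonically increasing in $x \geq 1$ for $\mu_L \eqq (\gamma_L - 1)/(\gamma_L + 1) \in (0,1)$ and equals one at $x = 1$. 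In the rarefaction case I would exploit the $1$-Riemann invariant $w_1 = v + 2c/(\gamma_L - 1)$, constant along the fan, together with the self-similar relation $\xi = v(\xi) - c(\xi)$ inside the fan, to conclude that $v - c$ increases monotonically from the head $v_L - c_L = \lambda_L^-$ to the tail $v^* - c_L^* = \lambda_L^+$; positivity of $c_L^* = c_L(p^*/p_L)^{(\gamma_L - 1)/(2\gamma_L)}$ then yields $\lambda_L^+ < v^*$.

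The right block is handled by the mirrored argument, using the $(n_s+2)$-Riemann invariant $w_{n_s+2} = v - 2c/(\gamma_R - 1)$ in the rarefaction case and the Rankine-Hugoniot identity $\lambda_R - v^* = (\rho_R/\rho_R^*)(\lambda_R - v_R) > 0$ in the shock case (where $\lambda_R > v_R + c_R > v_R$ by the same square-root argument as on the left). The main obstacle is really just bookkeeping: one must verify that the piecewise definitions of $\lambda_Z^{\pm}$ glue together continuously at the sonic point $p^* = p_Z$, so that the four case combinations produce a single consistent ordering, and check the monotonicity of the Hugoniot density ratio cleanly. Once these verifications are in place, the inequalities on each side of the contact are really single-material Riemann inequalities, a reduction justified by Lemma~\ref{lem:mass_frac} (which guarantees $\gamma(\bY)$ is constant across each genuinely nonlinear wave), so one can alternatively invoke the single-material result of \citet{clayton2022invariant} with $\gamma$ replaced by $\gamma_Z = \gamma(\bY_Z)$ on each side.
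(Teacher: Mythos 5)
Your argument is correct, and it is worth noting that the paper itself offers no proof of this lemma at all: it simply states that the ordering follows ``from \citet{clayton2022invariant}'' and defers entirely to that reference. Your closing remark --- that Lemma~\ref{lem:mass_frac} makes $\gamma(\bY)$ constant across each genuinely nonlinear wave, so the chain reduces to two single-material wave orderings with $\gamma_L$ on the left and $\gamma_R$ on the right --- is precisely the reduction the paper is implicitly invoking, so in that sense you have reconstructed the intended justification and then gone further by supplying the underlying single-material argument explicitly. The self-contained proof you sketch is sound: the middle equality is immediate from $\varphi(p^*)=0$; the shock case correctly combines $\lambda_L^- \leq v_L - c_L$ (square-root factor $\geq 1$) with the Rankine--Hugoniot mass relation and the monotonicity of the Hugoniot density ratio $(x+\mu_L)/(\mu_L x+1)$; and the rarefaction case correctly reads $\lambda_L^- = v_L - c_L$ and $\lambda_L^+ = v^* - c_L^*$ off the fan, with $\lambda_L^- \leq \lambda_L^+$ following from $f_L(p^*)<0$ and the Riemann invariant. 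The two points you flag as bookkeeping are indeed the only places requiring care: the identification of $\lambda_Z^{\mp}$ with the exact shock speed from the Rankine--Hugoniot conditions (so that the mass-jump identity applies to that specific quantity), and continuity of the piecewise definitions at $p^* = p_Z$, where both branches collapse to $v_Z \mp c_Z$. The trade-off is the usual one: the paper's citation is economical but leaves the reader to check that the single-material result transfers, whereas your direct argument is longer but makes the role of Lemma~\ref{lem:mass_frac} and the per-side constancy of $\gamma$ explicit.
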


We now present an essential result necessary for constructing the numerical method described in Section~\ref{sec:approximation}.
\begin{lemma}[Minimum entropy in the Riemann solution] \label{lem:minimum_entropy_riemann}
    Let $\bu(x, t)$ be the weak solution to the Riemann problem \eqref{eq:riemann_pb} defined by \eqref{eq:riemann_solution}.
    Let $\widehat{\lambda}_{\max}$ denote an upper bound on the maximum wave speed. Let the average of the Riemann solution be given by: $\overline{\bu}(t) \eqq \frac{1}{2\widehat{\lambda}_{\max} t}\int_{-\widehat{\lambda}_{\max}t}^{\widehat{\lambda}_{\max}t} \bu(x, t) \diff x$.
    Then $\overline{\bu}(t)$ satisfies:
    \begin{equation}
        s(\overline{\bu}(t)) \geq \min(s(\bu_L), s(\bu_R)).
    \end{equation} 
\end{lemma}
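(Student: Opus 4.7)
The plan is a classical Jensen-type argument resting on two ingredients: a pointwise lower bound on the specific entropy $s(\bu(x,t))$ inside the Riemann fan, and the convexity of the mathematical entropy $\eta(\bu) = -\rho s(\bu)$ recalled from~\citet{gouasmi2020minimum}.

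The first step is to establish the pointwise bound $s(\bu(x,t)) \geq \min(s(\bu_L), s(\bu_R))$ for almost every $x \in (-\widehat{\lambda}_{\max}t,\widehat{\lambda}_{\max}t)$. To the left of the contact ($x < v^{\ast}t$), Lemma~\ref{lem:mass_frac} gives $\bY(x,t) \equiv \bY_L$, so in this region the mixture Gibbs identity~\eqref{eq:gibbs_mixture} collapses to the single-species form $T\diff s = \diff e + p\diff \tau$ with frozen composition. Consequently, across the left rarefaction wave $s$ is a Riemann invariant of the genuinely nonlinear field and remains equal to $s(\bu_L)$, while across a left shock the Lax entropy condition associated with the entropy pair~\eqref{eq:entropy_pair} forces $s \geq s(\bu_L)$. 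An identical argument to the right of the contact yields $s \geq s(\bu_R)$ there. Outside the wave fan the solution equals $\bu_L$ or $\bu_R$, so the bound is trivially true. Combining, $s(\bu(x,t)) \geq \min(s(\bu_L), s(\bu_R))$ a.e.\ on the averaging interval.

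The second step applies Jensen's inequality to the convex functional $\eta$. Multiplying through by $-1$, this gives
\begin{equation*}
\rho(\overline{\bu}(t))\,s(\overline{\bu}(t)) \;\geq\; \frac{1}{2\widehat{\lambda}_{\max}t}\int_{-\widehat{\lambda}_{\max}t}^{\widehat{\lambda}_{\max}t}\rho(\bu(x,t))\,s(\bu(x,t))\,\diff x.
\end{equation*}
Combining with the pointwise bound from the first step and the positivity $\rho(\bu(x,t)) \geq 0$ yields
\begin{equation*}
\rho(\overline{\bu}(t))\,s(\overline{\bu}(t)) \;\geq\; \min(s(\bu_L),s(\bu_R))\,\rho(\overline{\bu}(t)),
\end{equation*}
where we used that $\rho$ is a linear functional of $\bu$, so the spatial average of $\rho(\bu)$ equals $\rho(\overline{\bu}(t))$. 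Dividing through by the positive quantity $\rho(\overline{\bu}(t))$ produces the claim.

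The main obstacle is the pointwise entropy lower bound across shocks: one must verify that the weak entropy solution constructed in~\eqref{eq:riemann_solution} satisfies the entropy inequality $\partial_t \eta(\bu) + \DIV \bq(\bu) \leq 0$ in the region where $\bY$ is piecewise constant, so that the jump from $\bu_L$ (resp.\ $\bu_R$) to the star state across a shock indeed strictly increases $s$. Because the mass fractions do not jump across the genuinely nonlinear waves by Lemma~\ref{lem:mass_frac}, this reduces to the classical single-species ideal-gas computation with $\gamma = \gamma_L$ (resp.\ $\gamma_R$) via the Rankine--Hugoniot relations, and the standard entropy-production calculation applies. The remainder is bookkeeping and positivity.
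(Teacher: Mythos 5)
Your proposal is correct and follows essentially the same route as the paper's proof: a pointwise lower bound $s(\bu(x,t)) \geq \min(s_L, s_R)$ obtained from the frozen mass fractions (Lemma~\ref{lem:mass_frac}) reducing each genuinely nonlinear wave to the single-species case (entropy constant across rarefactions, increasing across compressive shocks), followed by Jensen's inequality applied to the concave functional $\rho s$ and division by $\rho(\overline{\bu}(t)) = \overline{\rho}(t)$. The only cosmetic difference is that you phrase Jensen via the convex entropy $\eta = -\rho s$ and flip the sign, whereas the paper works directly with the concave $\rho s$; the content is identical.
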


\begin{proof}
    Since $(\rho s)(\bu)$ is concave, we can apply Jensen's inequality:
    \begin{equation}
        (\rho s)(\overline{\bu}(t)) \geq \frac{1}{2\widehat{\lambda}_{\max} t} \int_{-\wlambda_{\max} t}^{\wlambda_{\max} t} (\rho s)(\bu(x,t)) \diff x.
    \end{equation}
    Consider the case that the $L$-wave is an expansion wave.
    Then the entropy is constant up to the contact: $v^* = x/t$.
    That is, $s(\bu(x,t)) = s_L$ for all $x < v^* t$.
    In the case, that the left wave is a shock, one has that the specific entropy must increase.
    This can be seen by noting that left of the contact, $x < v^* t$, the equation of state behaves as a single material equation of state due to Lemma~\ref{lem:mass_frac}.
    As such, since the shock is compressive, the entropy must increase
    (See:~\citet[Chpt.~III, Sec.~2]{godlewski2013numerical} for more details).
    Therefore, $s(\bu(x,t)) \geq s_L$ for all $x < v^*t$.
    The same reasoning can be applied across the right wave.
    Therefore, $s(\bu(x,t)) \geq \min(s_L, s_R)$, pointwise a.e. for all $t > 0$. 
    Therefore, $(\rho s)(\overline{\bu}(t)) \geq \frac{\min(s_L, s_R)}{2\wlambda_{\max} t} \int_{-\wlambda_{\max}t}^{\wlambda_{\max} t} \rho(x,t) \diff x = \overline{\rho}(t) \min(s_L, s_R)$.
    But $(\rho s)(\overline{\bu}(t)) = \overline{\rho}(t) s(\overline{\bu}(t))$, hence the result follows.
\end{proof}
\section{Approximation details}\label{sec:approximation}
The spatial approximation we adapt in this paper is based on the invariant-domain preserving methodology introduced in \citet{guermond2016invariant}. The low-order method can be thought of as a discretization-independent generalization of the algorithm proposed in~\citet[pg.~163]{lax1954}. Various extensions for the compressible Euler equations have been proposed in~\citep{guermond2018second, clayton2022invariant, clayton2023robust, clayton2025approximation}.  For the sake of brevity, we omit the full approximation details and refer the reader to the previous references.

\subsection{Low-order method}\label{sec:low_order}
We now introduce the low-order approximation. Let $\calV$ denote the index set enumerating the degrees of freedom. Let $\calI(i)$ denote an index set for the local stencil for the degree of freedom, $i$.
Then, for every $i\in\calV$ and $j\in\calI(i)$, the low-order method with forward Euler time-stepping is given by:
\begin{equation}\label{eq:low_order}
    \frac{m_i}{\tau}(\bsfU_i\upLnp - \bsfU_i\upn) = \ssum\left[ -\left(\polf(\bsfU_j^n)-\polf(\bsfU_i^n)\right)\cij + \dijL (\bsfU_j^n - \bsfU_i^n) \right],
\end{equation}
where
\begin{equation} \label{eq:dijL}
    \dijL\eqq \max(\wlambda_{\max}(\bsfU_i^n, \bsfU_j^n,\bn_{ij})\|\cij\|, \wlambda_{\max}(\bsfU_j^n, \bsfU_i^n,\bn_{ji})\|\cji\|)
\end{equation}
and $\wlambda_{\max}\geq\lambda_{\max}$ is a suitable upper bound on the maximum wave speed in the local Riemann problem for $(\bsfU_i^n, \bsfU_j^n, \bn_{ij})$.
It is shown in \cite[Rem.~3.1]{guermond2016invariant} that the method~\eqref{eq:low_order} is globally mass conservative; that is to say $\sum_{i\in\calV}m_i\bsfU_i\upLnp  = \sum_{i\in\calV}m_i\bsfU_i\upn$. When using linear finite elements as the underlying discretization, the method is formally first-order accurate in space~\citep{guermond2016invariant}.

As shown in Proposition~\ref{prop:wave_speeds}, the computation of the discrete local maximum wave speed, $\lambda_{\max}$, would require the solution to a nonlinear equation for every $(i,j)$ pair which can be quite costly. Instead, we opt to use an upper bound on the maximum wave speed, $\widehat{\lambda}_{\max}$, which will be more efficient to compute.
As referenced in Section~\ref{sec:riemann_problem}, the maximum wave speed can be found in~\citet{clayton2022invariant} as well as an algorithm for computing the upper bound~\cite[Alg.~1]{clayton2022invariant} which we use in this work.
\begin{theorem}[Invariant-domain preserving]
    The low order method in \eqref{eq:low_order} and \eqref{eq:dijL} using the upper bound on the maximum wave speed, $\widehat{\lambda}_{\max}$, described in \cite[Alg.~1]{clayton2022invariant} under the CFL condition $1 + \frac{2\tau d_{ii}\upLnp}{m_i}\geq 0$ is invariant-domain preserving.
    That is, $\bsfU_i\upLnp \in \calA$ for all $i \in \calV$.
    Furthermore, the update $\bsfU_i\upLnp$ satisfies discrete entropy inequalities.
\end{theorem}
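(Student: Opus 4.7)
The plan is to follow the Guermond--Popov convex-combination strategy adapted to the multi-species setting, exploiting the Riemann-average results already established in Section~\ref{sec:riemann_problem}. The first step is purely algebraic: rewrite \eqref{eq:low_order} by isolating the diagonal term, using the identity $\sum_{j\in\calI(i)}\bc_{ij}=\bm{0}$ to eliminate the $\polf(\bsfU_i^n)$ piece, and factoring to obtain the canonical form
\begin{equation*}
\bsfU_i^{L,n+1} = \left(1 + \frac{2\tau d_{ii}^{L,n+1}}{m_i}\right)\bsfU_i^n + \sum_{j\in\calI(i)\setminus\{i\}} \frac{2\tau d_{ij}^L}{m_i}\,\overline{\bsfU}_{ij}^n,
\end{equation*}
where the bar state is
\begin{equation*}
\overline{\bsfU}_{ij}^n \eqq \tfrac{1}{2}(\bsfU_i^n + \bsfU_j^n) - \frac{(\polf(\bsfU_j^n) - \polf(\bsfU_i^n))\cdot \bn_{ij}\|\bc_{ij}\|}{2\,d_{ij}^L}.
\end{equation*}
Under the stated CFL condition, all coefficients in this expansion are nonnegative and (by the row-sum identity of the graph Laplacian) sum to one, making this a bona fide convex combination.

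The second step identifies each $\overline{\bsfU}_{ij}^n$ with the spatial average of the exact Riemann solution between $\bsfU_i^n$ and $\bsfU_j^n$ in the direction $\bn_{ij}$ over the fan $[-\widehat{\lambda}_{\max} t,\widehat{\lambda}_{\max} t]$ for any $t>0$ small enough that the waves do not leave the interval. Applying the integral form of the conservation law \eqref{eq:riemann_pb} over this strip and noting that $\widehat{\lambda}_{\max}\geq \lambda_{\max}$ guarantees that the flux contribution reduces to $(\polf(\bu_R)-\polf(\bu_L))\cdot\bn$, one recovers exactly the formula for $\overline{\bsfU}_{ij}^n$ with $\|\bc_{ij}\|$ playing the role of a geometric weight absorbed into $d_{ij}^L$.

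Third, I would verify that $\overline{\bsfU}_{ij}^n \in \calA$. Positivity of each $\alpha_k\rho_k$ and positivity of $e$ follow pointwise from the Riemann solution \eqref{eq:riemann_solution} (densities across rarefactions and Rankine--Hugoniot shocks remain positive, and the intermediate pressure $p^*>0$ is forced by the structure of $\varphi$), so their spatial averages remain nonnegative; the entropy condition $s(\overline{\bsfU}_{ij}^n)\geq \min(s(\bsfU_i^n),s(\bsfU_j^n))\geq s_{\min}$ is precisely Lemma~\ref{lem:minimum_entropy_riemann}. Since $\calA$ is convex, closure under the convex combination from step one yields $\bsfU_i^{L,n+1}\in \calA$.

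Finally, for the discrete entropy inequality, I would apply the concave functional $\eta_{\min}(\bu)\eqq -\rho(s(\bu)-s_{\min})_-$ (or more generally any admissible entropy $-\rho s$) to the convex combination, invoking Jensen to obtain $\eta(\bsfU_i^{L,n+1}) \leq (1+\tfrac{2\tau d_{ii}^{L,n+1}}{m_i})\eta(\bsfU_i^n)+\sum_{j\neq i}\tfrac{2\tau d_{ij}^L}{m_i}\eta(\overline{\bsfU}_{ij}^n)$, then bounding $\eta(\overline{\bsfU}_{ij}^n)$ using Lemma~\ref{lem:minimum_entropy_riemann} together with the consistency relation between the Riemann average and the entropy flux. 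The main obstacle, and the only genuinely nontrivial point, is the identification in step two: one must check carefully that the upper-bound wave speed $\widehat{\lambda}_{\max}$ (as opposed to the exact $\lambda_{\max}$) still produces a Riemann-integral representation for $\overline{\bsfU}_{ij}^n$; this works because enlarging the integration window merely adds uniform left/right constant states, which trivially lie in $\calA$ and do not alter the flux balance, so the invariance arguments transfer verbatim from the exact maximum speed to any valid upper bound.
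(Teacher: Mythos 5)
Your proposal is correct and follows essentially the same route as the paper: the paper's proof simply invokes Theorems~4.1 and~4.7 of \citet{guermond2016invariant}, and what you have written is precisely the content of those theorems unpacked for the multi-species setting -- the convex-combination/bar-state decomposition \eqref{eq:bar_state_update}--\eqref{eq:bar_states}, the identification of the bar states with Riemann fan averages (Remark~\ref{rem:bar_states}), membership of those averages in $\calA$ via Lemma~\ref{lem:minimum_entropy_riemann}, and Jensen's inequality for the discrete entropy inequalities. Your closing observation that replacing $\lambda_{\max}$ by any upper bound $\widehat{\lambda}_{\max}$ only enlarges the integration window with constant states, leaving the argument intact, is exactly the justification the cited theorems rely on.
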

\begin{proof}
    The proof follows directly by the application of Theorem 4.1 and Theorem 4.7 in~\citet{guermond2016invariant}.
\end{proof}

\subsection{Local bounds}
An important and well-known property of the method~\eqref{eq:low_order} is that it can be written as a convex combination of  ``bar states'' under the CFL condition $1 + \frac{2\tau d_{ii}\upLnp}{m_i}\geq 0$:
\begin{subequations}
    \begin{align}
         & \bsfU_i\upLnp = \left(1 + \frac{2\tau d_{ii}\upLnp}{m_i}\right)\bsfU_i^n + \snsum\frac{2 \tau \dijL}{m_i}\Ubar_{ij}\upn, \text{ where } \label{eq:bar_state_update} \\
         & \Ubar_{ij}\upn = \frac12(\bsfU_i\upn + \bsfU_j\upn) - \frac{1}{2 \dij\upLn}(\polf(\bsfU_j\upn) - \polf(\bsfU_i\upn))\cij\label{eq:bar_states}.
    \end{align}
\end{subequations}
When $\dijL$ is defined by~\eqref{eq:dijL}, it can be shown that $\Ubar\upn_{ij}\in\calA$ (see:~\citep[Thm~4.1]{guermond2016invariant}).
\begin{remark}[Bar states] \label{rem:bar_states}
    An important result regarding the bar states~\eqref{eq:bar_states} $\{\overline{\bsfU}^n_{ij}\}$ is that they are the average of the discrete Riemann solution.
    That is, $\overline{\bsfU}^n_{ij} = \overline{\bu}(t)$, where $\overline{\bu}(t)$ is the average of the Riemann solution for the state $(\bsfU^n_i, \bsfU^n_j, \bn_{ij})$ at the time $t = \frac{\Vert \bc_{ij} \Vert_{\ell^2}}{2\dijL}$.
    This is a classical result (see:  \cite[Lemma~2.1]{guermond2016invariant}).
\end{remark}
Notice that~\eqref{eq:bar_state_update} is a \textit{convex} combination of the states $\{\overline{\bsfU}^n_{ij}\}$ and therefore satisfies local bounds in space and time.
More specifically, we have that if $\bsfU_i\upLnp\in\calA$ for all $i\in\calV$, then $\Psi(\bsfU_i\upLnp) \geq \min_{j\in\calI(i)} \Psi(\Ubar_{ij}\upn)$ where $\Psi(\bu)$ is any quasiconcave functional. This fact will be used in the convex limiting section \S\ref{sec:lim_concave}.

\subsection{Provisional high-order method}\label{sec:high_order}
We now present a provisional high-order method with forward Euler time-stepping. The method follows that of~\citep[Eqn.~3.1]{clayton2023robust} where the modification here is in how the ``entropy indicator'' is defined (see: \S\ref{sec:entropy_indicator}).
For every $i\in\calV$ and $j\in\calI(i)$, we define the higher-order update:
\begin{subequations}
    \begin{align}
         & \frac{m_i}{\tau}(\bsfU_i\upHnp - \bsfU_i^n) = \ssum\left[\bsfF_{ij}\upHn + b_{ij} \bsfF_{j}\upHn - b_{ji}\bsfF_{i}\upHn\right]\, \text{ with } \\
         & \bsfF_{ij}\upHn\eqq -(\polf(\bsfU_j)-\polf(\bsfU_i^n))\cij + \dijH (\bsfU_j^n - \bsfU_i^n), \quad \bsfF_i\upHn\eqq\ssum\bsfF_{ij}\upHn.
    \end{align}
\end{subequations}
Here, two modifications have been made to the low-order method~\eqref{eq:low_order} to achieve higher-order accuracy in space.
\textup{(i)} We replaced the lumped mass matrix by an approximation of the consistent mass matrix to reduce dispersive errors.
That is to say, with $\sfX\in\polR^I$, where $I\eqq\text{card}(\calV)$, we have $(\polM^{-1} \sfX)_i\approx \sfX_i + \ssum(b_{ij}\sfX_j - b_{ji}\sfX_i)$ where $b_{ij}\eqq \delta_{ij}-\frac{m_{ij}}{m_j}$ and $\delta_{ij}$ denoting the Kronecker symbol.
\textup{(ii)} We replaced the low-order graph-viscosity coefficient by $d_{ij}\upHn\eqq\frac12(\zeta_i^n + \zeta^n_j) \cdot d_{ij}\upLn$ where  $\zeta\upn_i\in[0,1]$ is an indicator for entropy production and scales like $\calO(h)$ for piecewise linear finite elements where $h$ is the typical mesh size.

\subsection{Entropy indicator}\label{sec:entropy_indicator}
We now introduce an entropy indicator which is inspired by~\citep[Sec.~3.2]{clayton2023robust}.
The idea is as follows. For every $i$ and at every $t^n$, we consider a surrogate evolution of the full mixture: $\partial_t\bw + \DIV\polf^{i,n}(\bw) = \bm{0}$ where $\bw\eqq(\rho,\bbm, E)^{\mathsf{T}}$ and
\begin{equation}\label{eq:surrogate_flux}
    \polf^{i,n}(\bw)\eqq\begin{pmatrix}
        \bbm,                                   \\
        \bv\otimes\bbm + \tp^{i,n}(\bw)\polI_d, \\
        \bv(E + \tp^{i,n}(\bw))
    \end{pmatrix},
    \qquad
    \tp^{i,n}(\bw)\eqq(\gamma_i^{\min,n} - 1)\rho e(\bw),
\end{equation}
where $\gamma_i^{\min,n} \eqq \min_{j\in\calI(i)} \gamma(\bY^n_j)$.
Note that we have slightly abused notation by introducing $e$ as a function of $\bw$; however, we emphasize that $e(\bw) = e(\bu) = \rho^{-1} E - \frac12 \Vert \bv \Vert^2_{\ell^2}$.
We further define the respective ``surrogate entropy pair'' for the flux~\eqref{eq:surrogate_flux} by:
\begin{subequations}
    \begin{align}
         & \eta^{i,n}(\bw)\eqq\left(\rho^2e(\bw)\right)^{\frac{1}{\gamma_i^{\min,n}+1}} - \frac{\rho}{\rho_i^n}\left((\rho_i^n)^2e(\bsfW_i^n)\right)^{\frac{1}{\gamma_i^{\min,n}+1}}, \\
         & \bF^{i,n}(\bw)\eqq\bv\eta^{i,n}(\bw).
    \end{align}
\end{subequations}
Here, $\bsfW_i^n\eqq (\rho_i\upn, \bbm(\bsfU_i\upn), E(\bsfU_i\upn))^{\mathsf{T}}$ where $\rho_i\upn\eqq\spsum\alpha_k\rho_k(\bsfU_i\upn)$.
The idea now is to measure a discrete counterpart to: $\DIV\bF^{i,n}(\bw) = (\GRAD_{\bw}\eta^{i,n}(\bw))^{\mathsf{T}}\DIV\polf^{i,n}(\bw)$ which can be thought as an estimate to ``entropy production''. This is done via the entropy indicator $\zeta_i^n$ defined by:
\begin{subequations}
    \begin{align}
         & \zeta_i^n\eqq \frac{\abs{N_i^n}}{D_i^n + \frac{m_i}{\abs{D}}\eta^{i,n}(\bsfW_i^n)},                                        \\
         & N_i^n\eqq \ssum\left[\bF^{i,n}(\bsfW_j^n)\cij - (\GRAD_\bu\eta^{i,n}(\bsfW_i^n)\tr(\polf^{i,n}(\bsfW_j^n)\cij)\right],     \\
         & D_i^n\eqq \abs{\ssum\bF^{i,n}(\bsfW_j^n)\cij} + \ssum \abs{(\GRAD_\bu\eta^{i,n}(\bw_i^n))\tr(\polf^{i,n}(\bsfW_j^n)\cij)},
    \end{align}
\end{subequations}
where $m_i$ is the respective mass associated with the degree of freedom $i$ and $\abs{D}$ is the measure of the spatial domain.

We illustrate the performance of the entropy indicator in Figure~\ref{fig:entropy_indicator} with a two-species extension of the standard Woodward-Colella blast wave benchmark~\citep{woodward1984numerical} using $3201$ $\polQ_1$ degrees of freedom.
We assume that the high-pressure regions contain only air ($\gamma_1 = \frac{1005}{718}$) and assume the low-pressure region contains only helium ($\gamma_2 = \frac{5193}{3115}$).
The results are presented for the time snapshots $t = \{\mynum{0.015}{s}, \mynum{0.038}{s}\}$. We see that the entropy indicator (deep red) is almost zero everywhere except at the discontinuities of the mixture density (black). We further see that at the mass fraction $Y_1$ (teal) discontinuities, the entropy indicator is small which implies the method is near optimal at the species interface.

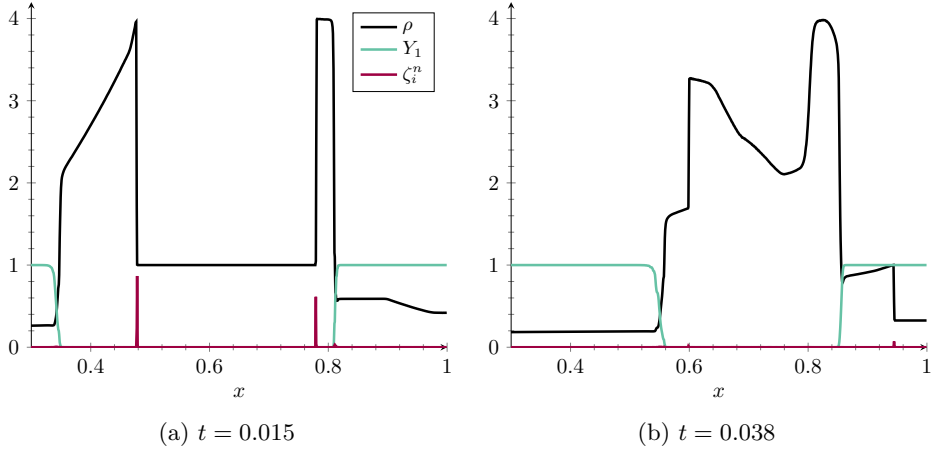
\begin{figure}[htbp!]
    \centering
    \subfloat[$t = 0.015$]{\adjustbox{width=0.49\linewidth, valign=b}{\begin{tikzpicture}[spy using outlines={rectangle, height=3cm,width=2.3cm, magnification=3, connect spies}]
	\begin{axis}[name=plot1,
		axis line style={latex-latex},
	    axis x line=left,
        axis y line=left,
		xlabel={$x$},
    	xmin=0.3, xmax=1,
    	xtick={0.4, 0.6, 0.8, 1.0},
    	ymin=0,ymax=4.2,
    	ytick={0, 1, 2, 3, 4},
        minor x tick num=4,
        minor y tick num=4,
        clip mode=individual,
    	ylabel style={rotate=-90},
    	legend style={at={(0.97, 0.97)},anchor=north east,font=\small},
    	legend cell align={left},
    	style={font=\normalsize}]

        \addplot[color=PlotColor5, 
                style={very thick}]
        table[x=x, y=rho, col sep=comma]{./data/wwc-t15-ds.csv};
        \addlegendentry{$\rho$};
        
        \addplot[color=PlotColor3, 
                style={very thick}]
        table[x=x, y=Y_0, col sep=comma]{./data/wwc-t15-ds.csv};
        \addlegendentry{$Y_1$};        
                
        \addplot[color=PlotColor1, 
                style={very thick}]
        table[x=x, y=alpha, col sep=comma]{./data/wwc-t15-ds.csv};
        \addlegendentry{$\zeta_i^n$};

	\end{axis}
\end{tikzpicture}}}
    \subfloat[$t = 0.038$]{\adjustbox{width=0.49\linewidth, valign=b}{\begin{tikzpicture}[spy using outlines={rectangle, height=3cm,width=2.3cm, magnification=3, connect spies}]
	\begin{axis}[name=plot1,
		axis line style={latex-latex},
	    axis x line=left,
        axis y line=left,
		xlabel={$x$},
    	xmin=0.3, xmax=1,
    	xtick={0.4, 0.6, 0.8, 1.0},
    	ymin=0,ymax=4.2,
    	ytick={0, 1, 2, 3, 4},
        minor x tick num=4,
        minor y tick num=4,
        clip mode=individual,
    	ylabel style={rotate=-90},
    	legend style={at={(0.97, 0.97)},anchor=north east,font=\small},
    	legend cell align={left},
    	style={font=\normalsize}]
    	
        \addplot[color=PlotColor5, 
                style={very thick}]
        table[x=x, y=rho, col sep=comma]{./data/wwc-t38-ds.csv};
        
        \addplot[color=PlotColor3, 
                style={very thick}]
        table[x=x, y=Y_0, col sep=comma]{./data/wwc-t38-ds.csv};
                
        \addplot[color=PlotColor1, 
                style={very thick}]
        table[x=x, y=alpha, col sep=comma]{./data/wwc-t38-ds.csv};

	\end{axis}
\end{tikzpicture}}}
    \caption{Entropy indicator illustration with multi-species Woodward-Colella blast wave. }\label{fig:entropy_indicator}
\end{figure}

\section{Convex limiting}\label{sec:limiting}
It is discussed in \citep{clayton2023robust} that the provisional high-order update $\bsfU\upHnp$ defined in the previous section is not guaranteed to be invariant-domain preserving. In this section, we present a convex limiting technique that corrects this issue. The novelty of the approach in this paper is the limiting procedure for enforcing the minimum principle on general concave functionals described in Section~\ref{sec:lim_concave}. This procedure will be used to enforce: \textup{i)} a local maximum and minimum principle on the partial densities, \textup{ii)} the positivity of the mixture internal energy and \textup{iii)} the minimum principle on the mixture entropy

\subsection{Set up}
The methodology is loosely based on the Flux-Corrected Transport methodology (see:~\cite{zalesak1979fully, boris1997flux, kuzmin2012flux}) and follows directly the works of~\citep{guermond2018second,clayton2023robust}. The limited update is given by:
\begin{subequations}\label{eq:high_order_update}
\begin{align}
\bsfU_i\upnp &= \sum_{j\in\calI(i)\setminus\{i\}} \omega_i\left(\bsfU_i\upLnp + \ell_{ij}\upn\bsfP_{ij}^n\right), \label{eq:limited}\\ 
\bsfP_{ij}^n &= \frac{\tau}{m_i \omega_i}\left(\bsfF_{ij}\upHn-\bsfF_{ij}\upLn + b_{ij}\bsfF_{j}\upHn - b_{ji}\bsfF_{i}\upHn\right).
\end{align}
\end{subequations} 
where the limiter coefficient is such that $\ell\upn_{ij}\in[0,1]$ and is defined to be symmetric $\ell\upn_{ij} = \ell\upn_{ji}$.
The weights $\omega_i$ form a set of convex coefficients and are defined by $\omega_i\eqq\frac{1}{\text{card}(\calI(i)\setminus\{i\})}$. Note that when $\ell_{ij}\upn = 0$, the update~\eqref{eq:limited} reduces to $\bsfU_i\upnp = \bsfU_i\upLnp$. Similarly, when $\ell_{ij}\upn = 1$, the update~\eqref{eq:limited} reduces to $\bsfU_i\upnp = \bsfU_i\upHnp$. 
Note that for each $i\in\calV$, the update~\eqref{eq:limited} is a convex combination of the states $\bsfU_i\upLnp + \ell_{ij}\upn\bsfP_{ij}\upn$ for all $j\in\calI(i)\setminus\{i\}$. Thus, if we can find an $\ell_{ij}\upn$ for each pair $(i,j)$ such that $\bsfU_i\upLnp + \ell_{ij}\upn\bsfP_{ij}\upn\in\calA$, then the update~\eqref{eq:limited} will be a convex combination of invariant-domain preserving states and thus invariant-domain preserving itself.
We now present a general algorithm for finding the optimal limiter coefficient such that the limited updated is invariant-domain preserving.

\subsection{General limiting on concave functionals} \label{sec:lim_concave}
In this section, we simplify the limiting process described in~\citep{guermond2019invariant} by using a linear interpolation between the low-order update and high-order update.
The method is only slightly more restrictive as it requires the functional to be concave rather than quasiconcave as a function of the conserved variable $\bu$.
We note that the partial densities are trivially concave and the internal energy is concave (see \cite[Sec.~4.1]{guermond2018second}).
Furthermore, it was shown in \citet[Sec.~2]{gouasmi2020minimum}, that $\rho s(\bu)$ is also concave. Thus, the constraints of interest for the multi-species model~\eqref{eq:model} will all be concave.
More specifically, we define: 
\begin{subequations} \label{eq:concave_functionals}
    \begin{align}
        \Psi_i^k(\bu) &\eqq (\alpha_k\rho_k)(\bu) - (\alpha_k\rho_k)_{i}^{\min, n}, \\
        \Psi_i^{n_s+k}(\bu) &\eqq (\alpha_k\rho_k)_{i}^{\max, n} - (\alpha_k\rho_k)(\bu), \\
         \Psi_i^{2n_s+1}(\bu) &\eqq \varepsilon(\bu) - \varepsilon_i^{\min, n}, \\
         \Psi_i^{2n_s+2}(\bu) &\eqq \sigma(\bu) - \sigma_i^{\min, n},
    \end{align}
\end{subequations}
for $k \in \intset{1}{n_s}$ where $\sigma(\bu)\eqq\rho s(\bu)$. 
The local bounds are defined as follows:
\begin{subequations}
\begin{align}
    &(\alpha_k\rho_k)^{\min,n}_i \eqq \min_{j\in\calI(i)} (\overline{(\alpha_k\rho_k)}^n_{ij}, (\alpha_k \rho_k)_j), 
    &&\varepsilon^{\min}_i \eqq \min_{j\in\calI(i)} (\varepsilon(\overline{\bsfU}^n_{ij}), \varepsilon(\bsfU^n_j)) \\
    &(\alpha_k\rho_k)^{\max,n}_i \eqq \max_{j\in\calI(i)} (\overline{(\alpha_k\rho_k)}^n_{ij}, (\alpha_k \rho_k)_j),
    &&\sigma^{\min}_i \eqq \min_{j\in\calI(i)} \sigma(\bsfU^n_j),
\end{align}
\end{subequations}
It was shown in \S\ref{sec:low_order} that the low-order update satisfies $\Psi^{\nu}(\bsfU_i\upLnp)\geq 0$ for all $i\in\calV$ and every $\nu$ in the \textit{ordered} set $\intset{1}{2n_s+2}$.
From Lemma~\ref{lem:minimum_entropy_riemann} and Remark~\ref{rem:bar_states}, we see that the expected discrete minimum entropy principle is encoded in the inequality $\Psi_i^{2n_s+2}(\bsfU_i\upLnp)\geq 0$.
\begin{remark}[Locally invariant-domain preserving]
Note that the above bounds can be used to define a \textit{local} invariant set since $\Psi^{\nu}(\bsfU_i\upLnp)\geq 0$ for all $i\in\calV$. That is, for each degree of freedom $i\in\calV$ we define:
\begin{equation}
    \calB_i \eqq \bigcap_{\nu = 1}^{2n_s+2} \calB^\nu_i,
\end{equation}
where
\begin{equation}
  \calB^\nu_i \eqq \{\bsfU \in \calA : \Psi^{\nu}_i(\bsfU) \geq 0, \, \forall j\in\calI(i)\}.
\end{equation}
%
%
Then $\bsfU_i\upLnp \in \calB_i \subset \calA$ (recall $\calA$ is defined in \eqref{eq:invariant_set}).
This property is stronger than the typical ``positivity-preserving'' since it includes a local minimum principle on the specific entropy.
\end{remark}

We would like to emphasize that the order of the limiting is essential. 
For example, if the high-order partial densities are negative and one tries to first limit the entropy, then the method will fail as the entropy requires the logarithm of the mixture density.
We define the interpolation between the low-order update and the high-order update as follows: 
\begin{equation}
    g^{\nu}_{ij}(\ell) \eqq \Psi_i^\nu(\bsfU_i\upLnp) + \ell \frac{\Psi_i^\nu(\bsfU_i\upLnp + \ell^j_{i,\nu - 1} \bsfP^n_{ij} ) - \Psi_i^\nu(\bsfU_i\upLnp)}{\ell^j_{i,\nu - 1} + \epsilon},
\end{equation}
for $\ell \in [0, \ell^j_{i,\nu-1}]$, $\ell_{i,0}^j \eqq 1$, and $0 < \epsilon \ll 1$ is a machine precision constant to avoid division by zero.
Note that $g^{\nu}_{ij}(0) \eqq \Psi^\nu(\bsfU_i\upLnp) \geq 0$ for all $\nu \in \intset{1}{2n_s+2}$.
The goal is to find $\ell_{i,\nu}^j\in [0,1]$ such that $g^{\nu}_{ij}(\ell_{i,\nu}^j) \geq 0$ for all $\nu \in \intset{1}{2n_s+2}$ in a sequential manner.
If $\Psi_i^\nu(\bsfU_i\upLnp + \ell^j_{i,\nu-1} \bsfP^n_{ij}) > 0$, then $\ell_{i,\nu}^j = \ell_{i,\nu-1}^j$. If this is not the case, then we find the root of $g^\nu_{ij}(\ell^j_{i,\nu}) = 0$ with a one step regula falsi approach which is given by:
\begin{equation} \label{eq:limiting_root}
    \ell^j_{i,\nu} = \min\left(\ell^j_{i,\nu}, \frac{-(\ell^j_{i,\nu-1} + \epsilon) \Psi_i^\nu(\bsfU_i\upLnp)} {\Psi_i^\nu(\bsfU_i\upLnp + \ell^j_{i,\nu - 1} \bsfP^n_{ij} ) - \Psi_i^\nu(\bsfU_i\upLnp)}\right).
\end{equation}
Under this sequential limiting, we see that $\ell^j_{i,2n_s+2} \leq \ell^j_{i,2n_s + 1} \leq \cdots \leq \ell^j_{i,1} \leq \ell^j_{i,0} = 1$ for every $i \in \calV$ and $j \in \calI(i) \setminus \{i\}$.

In order to make the limiting methodology precise, we frame the problem as the construction of a symmetric matrix, $\calL \in \text{Sym}(|\calV|, \Real)$, defined by $\calL \eqq \min(L, L\tr)$ (with the $\min$ operation being defined component-wise).
The entries of $L$ are given by:
\begin{equation}
    (L)_{ij} \eqq \begin{cases}
        \ell^j_{i,2n_s+2}, &\text{ if } j \in \calI(i) \setminus \{i\}, \\
        1, &\text{ otherwise.}
    \end{cases}
\end{equation}
Note that the symmetrization of the limiter guarantees global mass conservation~\cite[Sec.~4.2]{guermond2018second}.
The algorithm for computing the limiter for each $(i,j)$ pair is given in Algorithm~\ref{alg:limiter}
\begin{algorithm}
\caption{Compute the limiter $\ell_{ij}\upn$ for the pair $(i,j)$}
\label{alg:limiter}

    \KwIn{$\{\bsfU_i\upLnp\}$, $\{\bsfP^n_{ij}\}$}
    \KwOut{$\calL$}

    $(L)_{ij} = 1$, for all $i,j \in \calV$.
    
    \For{$i \in \calV$}{
        \For{$j \in \calI(i) \setminus\{i\}$}{
            \For{$\nu \in \intset{1}{2n_s+2}$}{
                \If{$\Psi_i^\nu(\bsfU_i\upLnp + \ell^j_{i,\nu-1} \bsfP^n_{ij}) \geq 0$}{
                    $\ell^j_{i,\nu} = \ell^j_{i,\nu-1}$. \;
                }
                \Else{
                    \textbf{Compute} $\ell^j_{i,\nu}$ from \eqref{eq:limiting_root}. \;
                }
                
            } 

            $(L)_{ij} \eqq \ell^j_{i,2n_s+2}$
        }
    }
    $\calL \eqq \min(L, L\tr)$
\end{algorithm}
We now give the main result of the paper.
\begin{theorem}[Invariant-domain preserving] \label{thm:idp_property}
     Let the limited update $\bsfU_i\upnp$ be defined by~\eqref{eq:high_order_update}  for all $i\in\calV$ combined with limiter procedure outlined in Algorithm~\ref{alg:limiter} for all $j\in\calI(i)$. Then, $\bsfU_i\upnp$ is globally mass-conservative and satisfies the local bounds~\eqref{eq:concave_functionals}, $\Psi_i^k(\bsfU\upLnp_i) \geq 0$, for all $k\in\intset{1}{2n_s + 2}$. That is, the update $\bsfU_i\upnp$ is invariant-domain preserving.
\end{theorem}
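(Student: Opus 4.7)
The plan is to split the theorem into (i) global mass conservation, which is essentially a bookkeeping exercise, and (ii) the local-bound inequalities $\Psi_i^\nu(\bsfU_i\upnp) \ge 0$ for every $i \in \calV$ and every $\nu \in \intset{1}{2n_s+2}$. For (i), summing \eqref{eq:high_order_update} over $i \in \calV$ and using the symmetry $\calL = \calL\tr$ enforced in Algorithm~\ref{alg:limiter} together with the antisymmetry of $\bsfF_{ij}\upHn - \bsfF_{ij}\upLn$ in $(i,j)$ reduces the claim to the conservation of the low-order update established in Section~\ref{sec:low_order}; this is the standard flux-corrected-transport argument of \cite[Sec.~4.2]{guermond2018second} and requires no new input.

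The substance lies in (ii). The update \eqref{eq:limited} expresses $\bsfU_i\upnp$ as a convex combination, with weights $\omega_i$, of the states $\bsfU_i\upLnp + \ell_{ij}\upn \bsfP_{ij}^n$ for $j \in \calI(i) \setminus \{i\}$. Each $\Psi_i^\nu$ in \eqref{eq:concave_functionals} is concave in $\bu$ (the partial-density constraints are affine; $\varepsilon$ is concave by \cite[Sec.~4.1]{guermond2018second}; $\sigma = \rho s$ is concave by \cite[Sec.~2]{gouasmi2020minimum}), so Jensen's inequality reduces the task to proving $\Psi_i^\nu(\bsfU_i\upLnp + \ell_{ij}\upn \bsfP_{ij}^n) \ge 0$ for every pair $(i,j)$ and every $\nu$.

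Fix $(i,j)$ and consider the one-dimensional restriction $\widetilde{\Psi}^\nu(\ell) \eqq \Psi_i^\nu(\bsfU_i\upLnp + \ell \bsfP_{ij}^n)$, which is concave on $[0,1]$ with $\widetilde{\Psi}^\nu(0) \ge 0$ since $\bsfU_i\upLnp \in \calB_i$. I would then prove by induction on $\nu$ the joint statement $\ell^j_{i,\nu} \le \ell^j_{i,\nu-1}$ and $\widetilde{\Psi}^\mu(\ell^j_{i,\nu}) \ge 0$ for every $\mu \le \nu$. The inductive step splits according to the two branches of Algorithm~\ref{alg:limiter}: either $\ell^j_{i,\nu} = \ell^j_{i,\nu-1}$, in which case the $\mu = \nu$ constraint holds by hypothesis and the earlier constraints are unchanged; or $\ell^j_{i,\nu}$ is set to the zero of the chord $g^\nu_{ij}$ joining $(0, \widetilde{\Psi}^\nu(0))$ and $(\ell^j_{i,\nu-1}, \widetilde{\Psi}^\nu(\ell^j_{i,\nu-1}))$, in which case concavity of $\widetilde{\Psi}^\nu$ gives $\widetilde{\Psi}^\nu(\ell) \ge g^\nu_{ij}(\ell)$ on $[0,\ell^j_{i,\nu-1}]$, so the chord's zero satisfies $\widetilde{\Psi}^\nu(\ell^j_{i,\nu}) \ge 0$. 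For the earlier $\mu < \nu$, concavity of $\widetilde{\Psi}^\mu$ with nonnegative endpoints $\widetilde{\Psi}^\mu(0)$ and $\widetilde{\Psi}^\mu(\ell^j_{i,\nu-1})$ (the latter by the inductive hypothesis) forces nonnegativity on the whole interval, in particular at $\ell^j_{i,\nu}$.

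To close, the symmetrization $\ell_{ij}\upn = \min(\ell^j_{i,2n_s+2}, \ell^i_{j,2n_s+2})$ can only shrink $\ell^j_{i,2n_s+2}$, and the same concavity-with-nonnegative-endpoints argument shows $\widetilde{\Psi}^\nu(\ell_{ij}\upn) \ge 0$ for every $\nu$; Jensen then delivers $\Psi_i^\nu(\bsfU_i\upnp) \ge 0$ as required. The delicate part of the argument is the inductive propagation in the previous paragraph: imposing a new constraint $\Psi^\nu \ge 0$ by shrinking $\ell$ must not violate any earlier constraint $\Psi^\mu \ge 0$ with $\mu < \nu$. This is exactly where \emph{genuine concavity} (rather than mere quasi-concavity, as in \cite{guermond2019invariant}) becomes essential, and it is what justifies the sequential ordering of the constraints chosen in \eqref{eq:concave_functionals}.
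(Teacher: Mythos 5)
Your proposal is correct and follows exactly the route the paper intends: the paper prints no separate proof for Theorem~\ref{thm:idp_property}, relying instead on the observations in Sections~\ref{sec:limiting}--\ref{sec:lim_concave} that \eqref{eq:limited} is a convex combination of the states $\bsfU_i\upLnp + \ell_{ij}\upn\bsfP_{ij}\upn$, that each $\Psi_i^\nu$ is concave, and that the symmetrization of $\calL$ yields conservation as in \cite[Sec.~4.2]{guermond2018second}. Your write-up simply makes that implicit argument explicit — in particular the inductive propagation showing that shrinking $\ell$ for a later constraint cannot break an earlier one, which is the point the paper leaves to the reader.
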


\subsection{Relaxation of bounds} \label{sec:relaxation}
It is known that one must relax the bounds for achieving optimal convergence in the $L^\infty$ norm (see:~\cite{khobalatte1994maximum} and~\cite[Sec.~4.7]{guermond2018second}). 
In this work, we directly follow~\citep[Sec.~4.7.1]{guermond2018second} for the relaxation of the partial density bounds and the mixture internal energy bound. As opposed to~\citep{guermond2018second}, we propose a different relaxation of the specific entropy 
bound $s^{\min}$: 
\begin{equation} \label{eq:entropy_relax}
    s^{\min,i}_{\text{relax}} \eqq \max\Big( c_v(\bY_i) \log \Big[(1 - r_{h,i}) \exp\Big( \frac{s^{\min, i}}{c_v(\bY_i)} \Big) \Big], s^{\min,i} - \Delta s^{\min,i} \Big),
\end{equation}
where $r_{h,i} = \big(\frac{m_i}{|D|}\big)^{1.5/d}$  and
\begin{equation}
    \Delta s^{\min,i} = \max_{j\in\calI(i) \setminus \{i\}} s\big( \tfrac12( \bsfU^n_i + \bsfU^n_j ) \big) - s^{\min,i}.
\end{equation}
We note that the physical entropy in~\citep{guermond2018second} was assumed to always be positive. However, the physical entropy of the mixture~\eqref{eq:mixture_entropy} can be negative. Thus, if the specific entropy happens to be close to zero, then typical relaxation of~\citep{guermond2018second} in the form $(1 - r_{h,i}) s^{\min,i}$ fails to provide a proper relaxation, hence the reason for the ``log-exp'' transformation in \eqref{eq:entropy_relax}.
We further note that the relaxation on the physical entropy leads to a ``weak'' enforcement of the minimum entropy principle as stated in~\citep{guermond2018second}. This is observed numerically. Without the relaxation, the minimum entropy principle is exactly enforced.
\section{Numerical illustrations}\label{sec:illustrations}
We now illustrate the proposed methodology.
In particular, we \textup{i)} verify the accuracy of the numerical method with smooth analytical solutions and an exact solution to the Riemann problem; \textup{ii)} compare with standard benchmarks in the literature; \textup{iii)} validate the proposed model by comparing with small-scale experiments.

\subsection{Preliminaries}
The numerical tests are performed with the high performance code,~\texttt{ryujin}~\citep{ryujin-2021-1, ryujin-2021-3}. The code uses continuous $\polQ_1$ finite elements on quadrangular meshes for the
spatial approximation and is built upon the \texttt{deal.II} finite element
library~\citep{dealII95}. For all tests, the time-stepping is done with a three stage, third-order Runge-Kutta method which is made to be invariant-domain preserving following the techniques introduced in~\cite{Ern_Guermond_2022}. The time step size is defined by $\tau\eqq 3 \tau_n$ where $\tau_n$ is computed during the first stage of each time
step using:
\begin{equation*}
    \dt_n \eqq \textup{CFL} \max_{i\in\calV}\frac {m_i}{2|d_{ii}\upLn|},
\end{equation*}
where $\textup{CFL}\in(0,1]$ is a user-defined parameter. For the sake of simplicity, we set $\textup{CFL} = 0.5$ for all tests. All units are assumed to be SI units unless otherwise stated.

\subsection{Verification}
We now verify the accuracy of the numerical method. To quantify the error, we introduce the following consolidated error indicator at time $t$ which measures the $L^q$-norm for all components of the system:
\begin{equation}\label{eq:error}
    \delta^q(t)\eqq\sum_{k=1}^{m}\frac{\|\bu_{k,h}(t) - \bu_k(t)\|_q}{\|\bu_k(t)\|_q}.
\end{equation}
Here, $\bu_k(t)$ is the exact state for the $k$-th component of the solution and $\bu_{h,k}(t)$ is the spatial approximation of the $k$-th component.

\subsubsection{1D two-species smooth traveling wave}
We consider a two-species extension of the one-dimensional test proposed in~\citep{guermond2018second}. The test consists of a two traveling partial density waves with constant mixture pressure and mixture velocity.
Let $\rho_0 = \SI{1}{kg\,m^{-3}}$ be the ambient mixture density.
The partial density profiles are defined by: %
\begin{subequations}
    \begin{equation}
        \rho(x, t) = \begin{cases}
            \rho_0 + 2^6(x_1 - x_0)^{-6}(x - v_0 t - x_0)^3(x_1 - x + v_0 t)^3, & \text{if }x_0\leq x - v_0 t\leq x_1, \\
            \rho_0,                                                             & \text{otherwise,}
        \end{cases}
    \end{equation}
    \begin{equation}
        (\alpha_1\rho_1)_0(x, t) = \frac34\times\rho(x, t), \qquad (\alpha_2\rho_2)_0(x, t) = \frac14\times\rho(x, t),
    \end{equation}
\end{subequations}
where $x_0 = \mynum{0.1}{m}$ and $x_0 = \mynum{0.3}{m}$. The mixture pressure and velocity are set to $p(\bx, t)=\mynum{1}{Pa}$ and $v(x, t) = \mynum{1}{m\,s^{-1}}$, respectively. Each species is characterized by the equation of state parameters $\gamma_1 = \frac{1005}{718}$ and $\gamma_2 = \frac{4041.4}{2420}$.
The computational domain is defined by $D = (0, \SI{1}{m})$ with Dirichlet boundary conditions. The tests are performed on a sequence of uniform meshes. The final time is set to $t_f = \mynum{0.6}{s}$. We report the consolidated error $\delta_q(t_f)$ for $q=\{1,2,\infty\}$ and respective convergence rates in Table~\ref{tab:smooth_wave}. We observe optimal convergence rates.
\begin{table}[htbp!]
    \centering
    \begin{tabular}[b]{rrlrlrl}
        \toprule
        $I$   & $\delta^1(t_f)$             &      & $\delta^2(t_f)$             &      & $\delta^{\infty}(t_f)$             \\
        101   & \num{0.01737930751465193}   &      & \num{0.0451890465480829}    &      & \num{0.1461546759394965}    &      \\
        201   & \num{0.003145152959788272}  & 2.47 & \num{0.01021421538305274}   & 2.15 & \num{0.04209201481416872}   & 1.8  \\
        401   & \num{0.000277534116248811}  & 3.5  & \num{0.0008982376509427262} & 3.51 & \num{0.004929716239465058}  & 3.09 \\
        801   & \num{1.789543153519735e-05} & 3.96 & \num{4.747959579829728e-05} & 4.24 & \num{0.0002101900758007725} & 4.55 \\
        1601  & \num{1.877863784684281e-06} & 3.25 & \num{6.038712429264608e-06} & 2.97 & \num{3.766542496180426e-05} & 2.48 \\
        3201  & \num{2.254633347875247e-07} & 3.06 & \num{8.542662344561827e-07} & 2.82 & \num{6.966637539098599e-06} & 2.43 \\
        6401  & \num{2.69448082167417e-08}  & 3.06 & \num{1.211241599289386e-07} & 2.82 & \num{1.278280206040598e-06} & 2.45 \\
        12801 & \num{3.205858942018182e-09} & 3.07 & \num{1.721189997409589e-08} & 2.82 & \num{2.368742379276548e-07} & 2.43 \\
        25601 & \num{3.859142205735488e-10} & 3.05 & \num{2.451218217871037e-09} & 2.81 & \num{4.383276644141776e-08} & 2.43 \\
        \bottomrule
    \end{tabular}
    \caption{Errors and convergence rates for 1D two-species smooth traveling wave problem.}\label{tab:smooth_wave}
\end{table}
\subsubsection{Riemann problems}
We now verify the proposed method by comparing with exact solutions to the Riemann problem which is provided in Section~\ref{app:RP}.
In this paper, we use the Riemann data given in~\citep[Tab.~2]{renac2021entropy} for tests ``RP1'' and ``RP2''. We recall the respective details in Table~\ref{tab:RP_data}. The Riemann data is given for the variable $\bw(x,t)\eqq(Y_1, \rho, v, p)^{\mathsf{T}}$. We set $Y_2 = 1 - Y_1$. The conserved partial densities are set by $\alpha_1\rho_1 = Y_1\rho$ and $\alpha_2\rho_2 = Y_2\rho$.
\begin{table}[htbp!]
    \centering
    \begin{tabular}{ccclcc}
                   & $\bw_L$                                 & $\bw_R$                                 & $t_f$             & $\gamma_1$         & $\gamma_2$            \\
        \toprule
        \text{RP1} & $(0.5, 1, 0, 1)^{\mathsf{T}}$           & $(0.5, 0.125, 0, 0.1)^{\mathsf{T}}$     & $\mynum{0.2}{s}$  & $\frac{1.5}{1.0}$  & $\frac{1.3}{1.0}$     \\
        \midrule
        \text{RP2} & $(1, 1.602, 0, \num{1e6})^{\mathsf{T}}$ & $(0, 1.122, 0, \num{1e5})^{\mathsf{T}}$ & $\mynum{3e-4}{s}$ & $\frac{5.2}{3.12}$ & $\frac{1.402}{0.743}$ \\
        \bottomrule
    \end{tabular}
    \caption{Initial conditions and problem setup for the 1D Riemann problems.}\label{tab:RP_data}
\end{table}
For each test, the computational domain is defined by $D = (0, \SI{1}{m})$ with Dirichlet boundary conditions. The diaphragm is set to $x_0 = \mynum{0.5}{m}$.
The convergence tests are performed on a sequence of uniform meshes. We observe an asymptotic rate of 1 which is expected in the $L^1$-norm.
We show the output for both Riemann problems with various refinement levels in \cref{fig:rp}.
\begin{table}[htbp!]
    \centering
    \begin{subtable}[t]{0.45\textwidth}
        \centering
        \begin{tabular}[b]{rrlrlrl}
            \toprule
            $I$   & $\delta^1(t_f)$            &      \\
            101   & \num{0.06909400157298831}  &      \\
            201   & \num{0.03586304487271475}  & 0.95 \\
            401   & \num{0.01971692340664219}  & 0.86 \\
            801   & \num{0.01167814873120082}  & 0.76 \\
            1601  & \num{0.007606564787683386} & 0.62 \\
            3201  & \num{0.004424058777355496} & 0.78 \\
            6401  & \num{0.002472830353424289} & 0.84 \\
            12801 & \num{0.001269588897949639} & 0.96 \\
            \bottomrule
        \end{tabular}
        \caption{RP1}
        \label{tab:RP_1}
    \end{subtable}
    \begin{subtable}[t]{0.45\textwidth}
        \centering
        \begin{tabular}[b]{rrlrlrl}
            \toprule
            $I$   & $\delta^1(t_f)$            &      \\
            101   & \num{0.2897800802921233}   &      \\
            201   & \num{0.1808748820047454}   & 0.68 \\
            401   & \num{0.08091608336112947}  & 1.16 \\
            801   & \num{0.05386850958187564}  & 0.59 \\
            1601  & \num{0.03477500532249028}  & 0.63 \\
            3201  & \num{0.02086892952882959}  & 0.74 \\
            6401  & \num{0.01114776898925952}  & 0.9  \\
            12801 & \num{0.005543618410043464} & 1.01 \\
            \bottomrule
        \end{tabular}
        \caption{RP2}
        \label{tab:RP_2}
    \end{subtable}
    \caption{Errors and convergence rates for 1D Riemann problems.}
\end{table}

\begin{figure}[htbp!]
    \centering
    \subfloat[RP1]{\adjustbox{width=0.49\linewidth, valign=b}{\begin{tikzpicture}[spy using outlines={rectangle, height=3cm,width=2.3cm, magnification=3, connect spies}]
	\begin{axis}[name=plot1,
		axis line style={latex-latex},
	    axis x line=left,
        axis y line=left,
		xlabel={$x$},
    	xmin=0, xmax=1,
    	xtick={0, 0.2, 0.4, 0.6, 0.8, 1.0},
    	ylabel={$\rho$},
    	ymin=0,ymax=1.05,
    	ytick={0, 0.2, 0.4, 0.6, 0.8, 1.0},
        minor x tick num=4,
        minor y tick num=4,
        clip mode=individual,
    	ylabel style={rotate=-90},
    	legend style={at={(0.97, 0.97)},anchor=north east,font=\small},
    	legend cell align={left},
    	style={font=\normalsize}]
    	
        \addplot[color=black, style={thin}, only marks, mark=o, mark options={scale=0.6}, mark repeat = 12, mark phase = 0]
        table[x=Points_0, y=rho, col sep=comma]{./data/rp1_exact.csv};
        \addlegendentry{Exact};
                
        \addplot[color=PlotColor1, 
                style={very thick}]
        table[x=Points_0, y=rho, col sep=comma]{./data/rp1_200.csv};
        \addlegendentry{$N = 200$};
        
        \addplot[color=PlotColor3, 
                style={very thick}]
        table[x=Points_0, y=rho, col sep=comma]{./data/rp1_400.csv};
        \addlegendentry{$N = 400$};
        
        \addplot[color=PlotColor5, 
                style={thick}]
        table[x=Points_0, y=rho, col sep=comma]{./data/rp1_800.csv};
        \addlegendentry{$N = 800$};

	\end{axis}
\end{tikzpicture}}
        {\adjustbox{width=0.49\linewidth, valign=b}{\begin{tikzpicture}[spy using outlines={rectangle, height=3cm,width=2.3cm, magnification=3, connect spies}]
	\begin{axis}[name=plot1,
		axis line style={latex-latex},
	    axis x line=left,
        axis y line=left,
		xlabel={$x$},
    	xmin=0, xmax=1,
    	xtick={0, 0.2, 0.4, 0.6, 0.8, 1.0},
    	ylabel={$p$},
    	ymin=0,ymax=1.05,
    	ytick={0, 0.2, 0.4, 0.6, 0.8, 1.0},
        minor x tick num=4,
        minor y tick num=4,
        clip mode=individual,
    	ylabel style={rotate=-90},
    	legend style={at={(0.97, 0.97)},anchor=north east,font=\small},
    	legend cell align={left},
    	style={font=\normalsize}]
    	
        \addplot[color=black, style={thin}, only marks, mark=o, mark options={scale=0.6}, mark repeat = 12, mark phase = 0]
        table[x=Points_0, y=p, col sep=comma]{./data/rp1_exact.csv};
                
        \addplot[color=PlotColor1, 
                style={very thick}]
        table[x=Points_0, y=p, col sep=comma]{./data/rp1_200.csv};
        
        \addplot[color=PlotColor3, 
                style={very thick}]
        table[x=Points_0, y=p, col sep=comma]{./data/rp1_400.csv};
        
        \addplot[color=PlotColor5, 
                style={thick}]
        table[x=Points_0, y=p, col sep=comma]{./data/rp1_800.csv};

	\end{axis}
\end{tikzpicture}}}
    }
    \newline
    \subfloat[RP2]{\adjustbox{width=0.49\linewidth, valign=b}{\begin{tikzpicture}[spy using outlines={rectangle, height=3cm,width=2.3cm, magnification=3, connect spies}]
	\begin{axis}[name=plot1,
		axis line style={latex-latex},
	    axis x line=left,
        axis y line=left,
		xlabel={$x$},
    	xmin=0, xmax=1,
    	xtick={0, 0.2, 0.4, 0.6, 0.8, 1.0},
    	ylabel={$\rho$},
    	ymin=0.95,ymax=3.05,
        minor x tick num=4,
        minor y tick num=4,
        clip mode=individual,
    	ylabel style={rotate=-90},
    	legend style={at={(0.03, 0.97)},anchor=north west,font=\small},
    	legend cell align={left},
    	style={font=\normalsize}]
    	
        \addplot[color=black, style={thin}, only marks, mark=o, mark options={scale=0.6}, mark repeat = 12, mark phase = 0]
        table[x=Points_0, y=rho, col sep=comma]{./data/rp2_exact.csv};
        \addlegendentry{Exact};
                
        \addplot[color=PlotColor1, 
                style={very thick}]
        table[x=Points_0, y=rho, col sep=comma]{./data/rp2_200.csv};
        \addlegendentry{$N = 200$};
        
        \addplot[color=PlotColor3, 
                style={very thick}]
        table[x=Points_0, y=rho, col sep=comma]{./data/rp2_400.csv};
        \addlegendentry{$N = 400$};
        
        \addplot[color=PlotColor5, 
                style={thick}]
        table[x=Points_0, y=rho, col sep=comma]{./data/rp2_800.csv};
        \addlegendentry{$N = 800$};

	\end{axis}
\end{tikzpicture}}{\adjustbox{width=0.49\linewidth, valign=b}{\begin{tikzpicture}[spy using outlines={rectangle, height=3cm,width=2.3cm, magnification=3, connect spies}]
	\begin{axis}[name=plot1,
		axis line style={latex-latex},
	    axis x line=left,
        axis y line=left,
		xlabel={$x$},
    	xmin=0, xmax=1,
    	xtick={0, 0.2, 0.4, 0.6, 0.8, 1.0},
    	ylabel={$p$},
    	ymin=0,ymax=1.05e6,
    	ytick={0, 0.2e6, 0.4e6, 0.6e6, 0.8e6, 1.0e6},
        minor x tick num=4,
        minor y tick num=4,
        clip mode=individual,
    	ylabel style={rotate=-90},
    	legend style={at={(0.97, 0.97)},anchor=north east,font=\small},
    	legend cell align={left},
    	style={font=\normalsize}]
    	
        \addplot[color=black, style={thin}, only marks, mark=o, mark options={scale=0.6}, mark repeat = 12, mark phase = 0]
        table[x=Points_0, y=p, col sep=comma]{./data/rp2_exact.csv};
                
        \addplot[color=PlotColor1, 
                style={very thick}]
        table[x=Points_0, y=p, col sep=comma]{./data/rp2_200.csv};
        
        \addplot[color=PlotColor3, 
                style={very thick}]
        table[x=Points_0, y=p, col sep=comma]{./data/rp2_400.csv};
        
        \addplot[color=PlotColor5, 
                style={thick}]
        table[x=Points_0, y=p, col sep=comma]{./data/rp2_800.csv};

	\end{axis}
\end{tikzpicture}}}}
    \caption{Mixture density (left) and pressure (right) at $t_f$ for the 1D Riemann problems computed using varying mesh resolutions. \label{fig:rp}
    }
\end{figure}
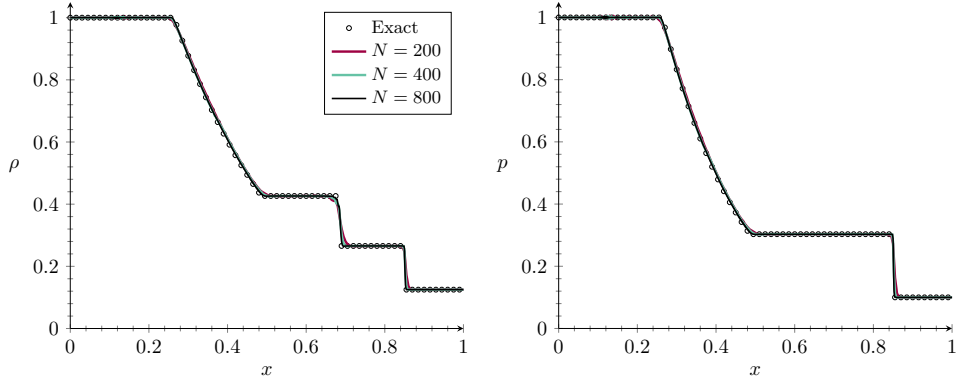
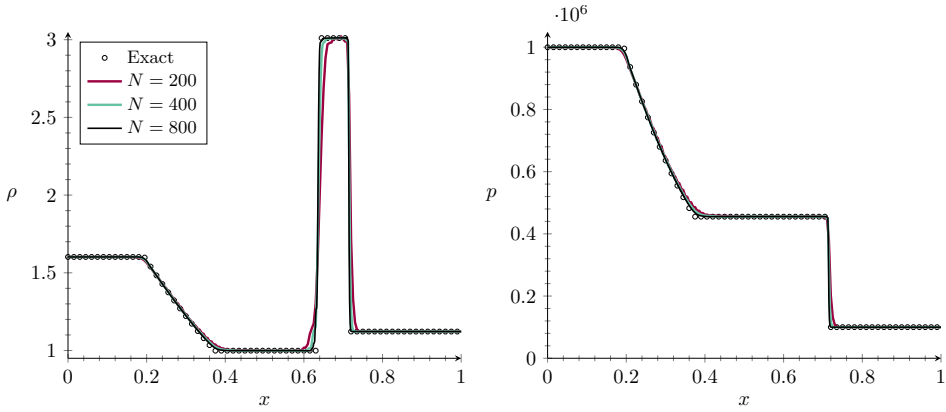

\subsection{Benchmarks}
We now benchmark the efficacy of the proposed scheme in terms of canonical flow problems in the literature as well as novel problem configurations.

\subsubsection{2D -- Shock-bubble interaction}
We first consider a two-dimensional shock-bubble interaction. Although there are many variations of this problem in the literature (\eg \citep{niederhaus2008computational, quirk1996dynamics}), we choose to simulate the physical experiment described in~\cite{layes2009experimental}.
The experiment consists of a shock wave traveling at Mach 1.43 in air ($\gamma_1 = \frac{1005}{718}$) colliding with a krypton bubble ($\gamma_2 = \frac{248}{149}$).
We note that the physical setup utilized a ``straw'' to fill the krypton in a thin soap bubble to prevent the gas from diffusing into the air.
However, we note that the encapsulating soap bubble cannot be described by the current model and the ``straw'' cannot be properly modeled in two dimensions.
Thus, we consider the air and krypton as the only species in the numerical simulation.
Let $\rho_\text{shock} = 2.025655508041382
    ~\SI{}{ kg~m^{-3}},\, v_\text{shock} = 212.66552734375~\SI{}{m~s^{-1}},\, p_\text{shock} = 224835~\SI{}{ Pa}$.
Then, the initial state is given as follows:
\begin{equation} \label{eq:shock_bubble_initial}
    \bw(\bx, t)
    \eqq
    \begin{cases}
        (1, \rho_\text{shock}, (v_\text{shock}, 0), p_\text{shock}), & \quad \text{ if } x_1 < \mynum{0.03}{m}                              \\
        (0, 3.408, \mathbf{0}, 101325),                              & \quad \text{ if } \Vert \bx - \bx_b\Vert_\ell \leq \mynum{0.022}{m}, \\
        (1, 1.163,  \mathbf{0}, 101325),                             & \quad \text{ otherwise.}
    \end{cases}
\end{equation}
where $\bx_b = (\mynum{0.052}{m}, \mynum{0.04}{m})$ denotes the center of the bubble.

The computational domain is set to $D = (\mynum{-0.12}{m}, \mynum{0.88}{m})\times(0, \mynum{0.08}{m})$. To ensure mesh convergence (or at least close to), we run the simulation with 20,496,281 $\polQ_1$ DOFs which corresponds to 16,000 elements in the $x$-direction and 1,280 elements in the $y$-direction.
The final time is set to $t_{\text{f}} = \mynum{1230}{\mu s}$.
The numerical Schlieren for the partial density $\alpha_1 \rho_1$ plot is compared with the results in \cite[Fig.~5]{layes2009experimental}.
In \cref{fig:schlieren_shock_bubble}, one sees that the vorticity in the physical experiment is not as noticeable compared to the numerical Schlieren.
This difference has been discussed in \citet[Sec.~4.~B]{layes2007quantitative} and \citet{giordano2006richtmyer}.
\begin{figure}[htbp!]
    \centering
    \includegraphics[clip,trim={2 0 0 0}, width=0.95\linewidth]{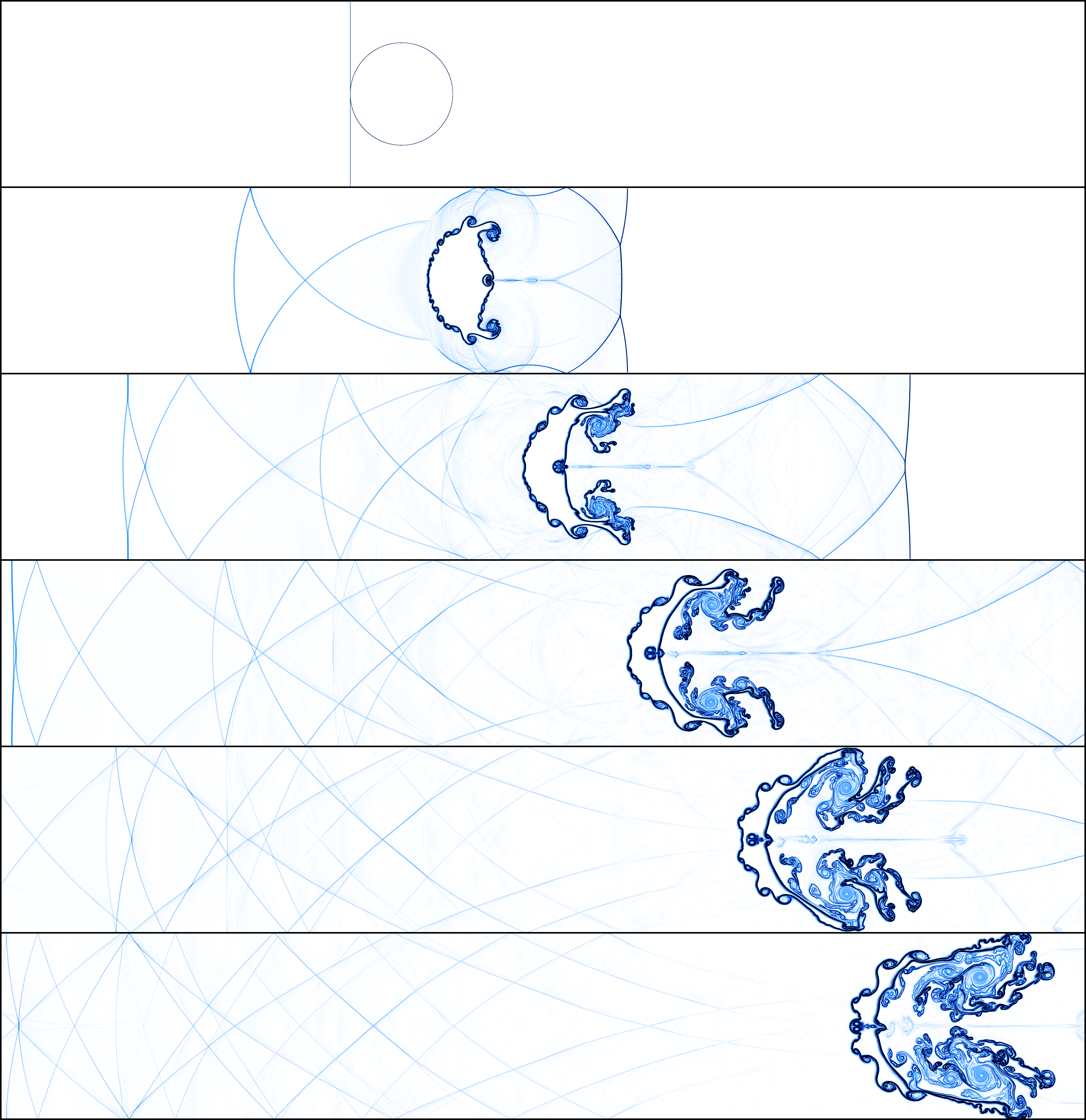}
    \caption{2D Shock-bubble -- Numerical schlieren output (with respect to the air partial density) at $t = \{0, 246, 492, 738, 984, 1230\}\SI{}{\mu s}$.
    }
    \label{fig:schlieren_shock_bubble}
\end{figure}
\begin{figure}
    \centering
    \includegraphics[width=0.95\linewidth]{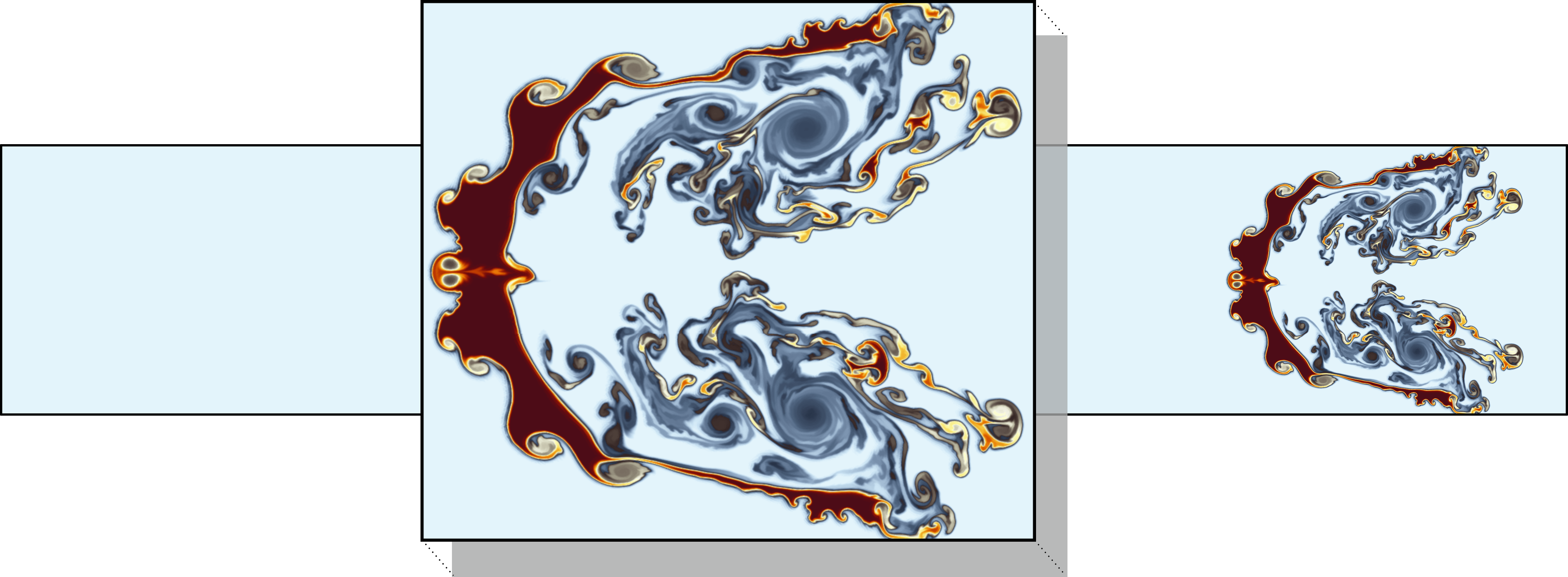}
    \caption{2D Shock-bubble -- A zoomed in snapshot of the mass fraction for krypton at $t = \mynum{1230}{\mu s}$.}
    \label{fig:zoomed_in_shock_bubble_mass_frac}
\end{figure}
Overall, we resolve the typical flow features of standard shock-bubble type problems and compare well with the simulations presented in~\cite[Fig.~7]{layes2007quantitative}.

\subsection{2D -- Simplified inertial confinement fusion configuration}
We now perform a simulation of a multi-species implosion problem akin to inertial confinement fusion (ICF)-type configurations inspired by~\cite[Sec.~6.6]{bello2020matrix}. A demonstration of physical experiments with similar setups can be found in~\citet{li2020convergent}. The problem consists of a circular Mach 5 shock wave converging towards a species interface inducing Richtmyer--Meshkov instabilities in the flow. These instabilities are seeded by perturbations in the interface, which drive the flow into a chaotic mixing state with distinct vortical structures. We simulate a shock wave moving through ambient air ($\gamma_1 = \frac{1.008}{0.72}$) with an internal helium region ($\gamma_2 = \frac{5.1932}{3.115}$). Let $\rho_\text{shock} = 5.002322673797607
    ~\SI{}{ kg~m^{-3}},\, v_\text{shock} = 1.4966877698898315~\SI{}{m~s^{-1}},\, p_\text{shock} = 2.8997678756713867~\SI{}{ Pa}$.
The set up is as follows:
\begin{equation} \label{eq:icf_initial}
    \bw(\bx, t)
    \eqq
    \begin{cases}
        (1,\rho_{\text{shock}}, -v_\text{shock}\frac{\bx}{\|\bx\|}, p_\text{shock}), & \quad \text{ if } 1.1 < \|\bx\|,                         \\
        (1, 1, \bm{0}, 0.1),                                                         & \quad \text{ if } 1 + 0.02\cos(8\theta) < \|\bx\| < 1.1, \\
        (0, 0.05, \bm{0}, 0.1),                                                      & \quad \text{ otherwise,}
    \end{cases}
\end{equation}
where $\theta = \arctan(x_2 / x_1)$. See \cref{fig:icf-initial} for the visual representation of the initial conditions.

The computational domain is the disk characterized by $R = \SI{1.2}{m}$ centered at $(0,0)$. The mesh is composed of 12,582,912 elements with 12,587,009 $\polQ_1$ degrees of freedom. We enforce Dirichlet conditions on the boundary. The final time is set to $t_f=\mynum{0.5}{s}$. The contours of density (left) and mass fraction $Y_1$ (right) are shown in \cref{fig:icf} at various times $t = \{\mynum{0.2}{s}, \mynum{0.4}{s}, \mynum{0.6}{s}\}$.
The effects of the interface perturbation can be seen initially in the deformation of the shock structure and contact line. These Richtmyer--Meshkov instabilities formed into several distinct quasi-radially symmetry vortical structures stemming from the peaks of the interface perturbations. The interaction of the shocks and contact discontinuities with these vortical structures then forced the flow into a more chaotic mixing state. These small-scale flow features and flow discontinuities were well-resolved by the proposed numerical approach.

\begin{figure}
    \centering
    \subfloat[$\rho(\bu)$]{\adjustbox{width=0.49\linewidth, valign=b}{\includegraphics{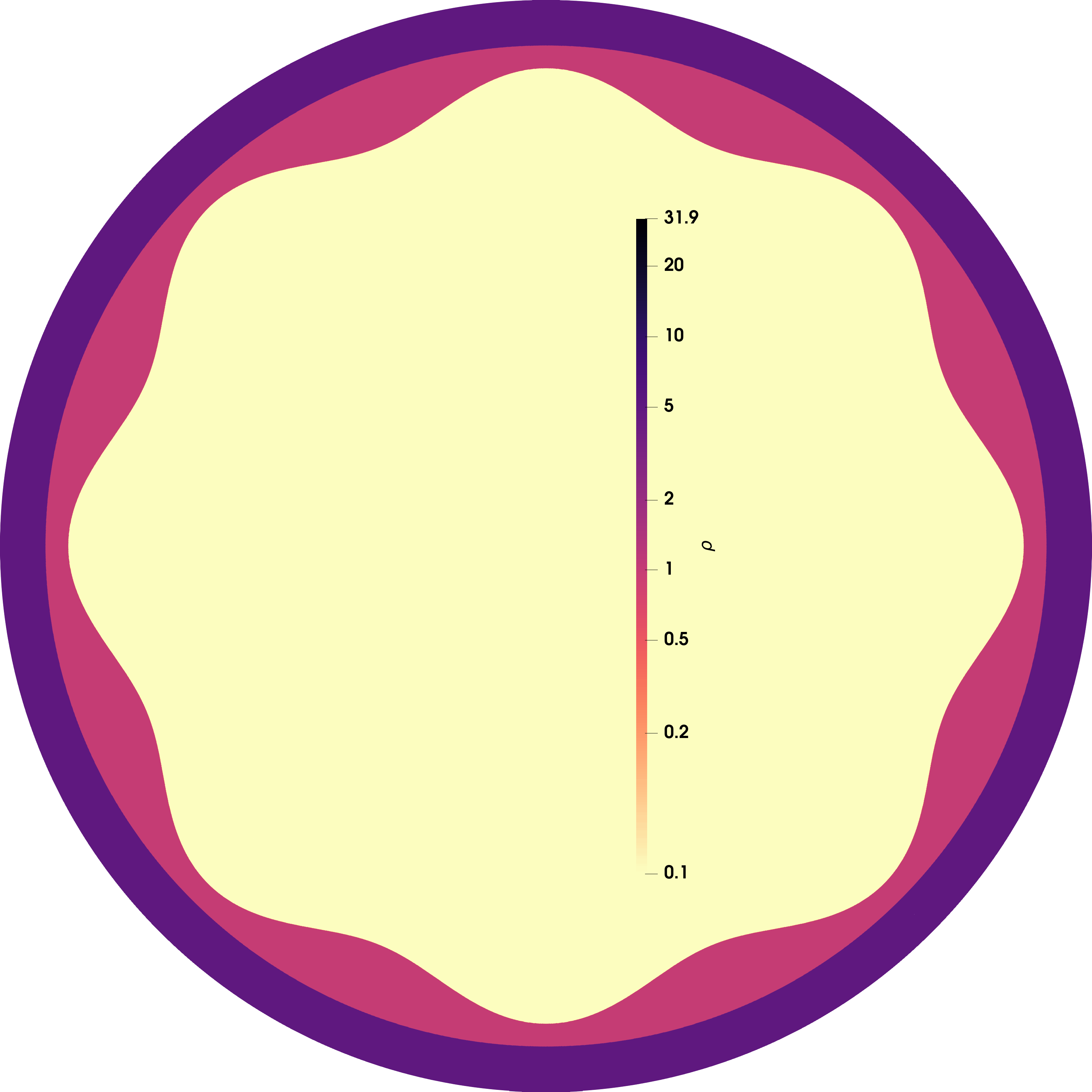}}}
    \hfill
    \subfloat[$Y_1$]{\adjustbox{width=0.49\linewidth, valign=b}{\includegraphics{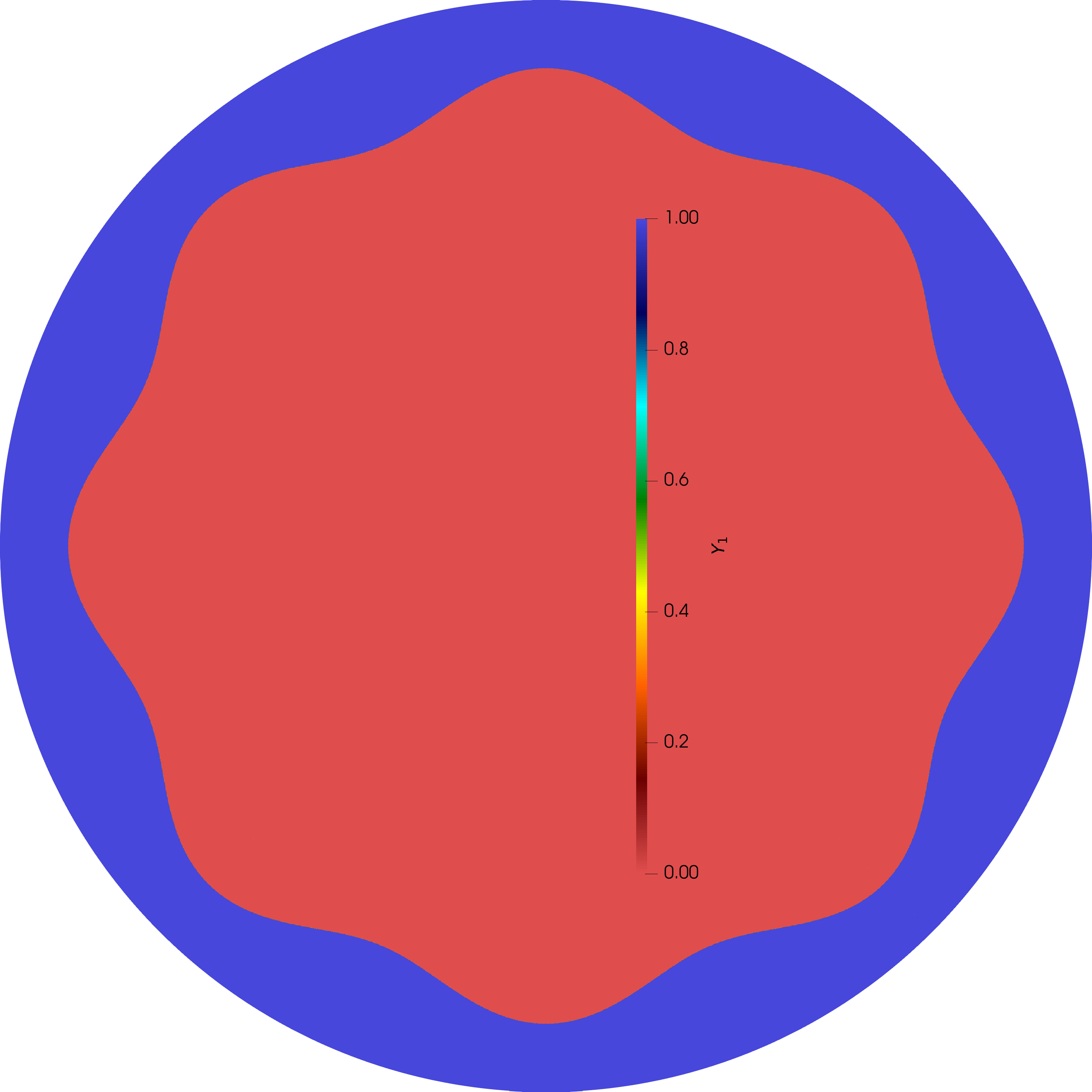}}}
    \caption{2D ICF-like problem -- Initial set up.}
    \label{fig:icf-initial}
\end{figure}

\begin{figure}
    \centering
    \subfloat[$t = 0.2s$]{\adjustbox{width=0.49\linewidth, valign=b}{\includegraphics{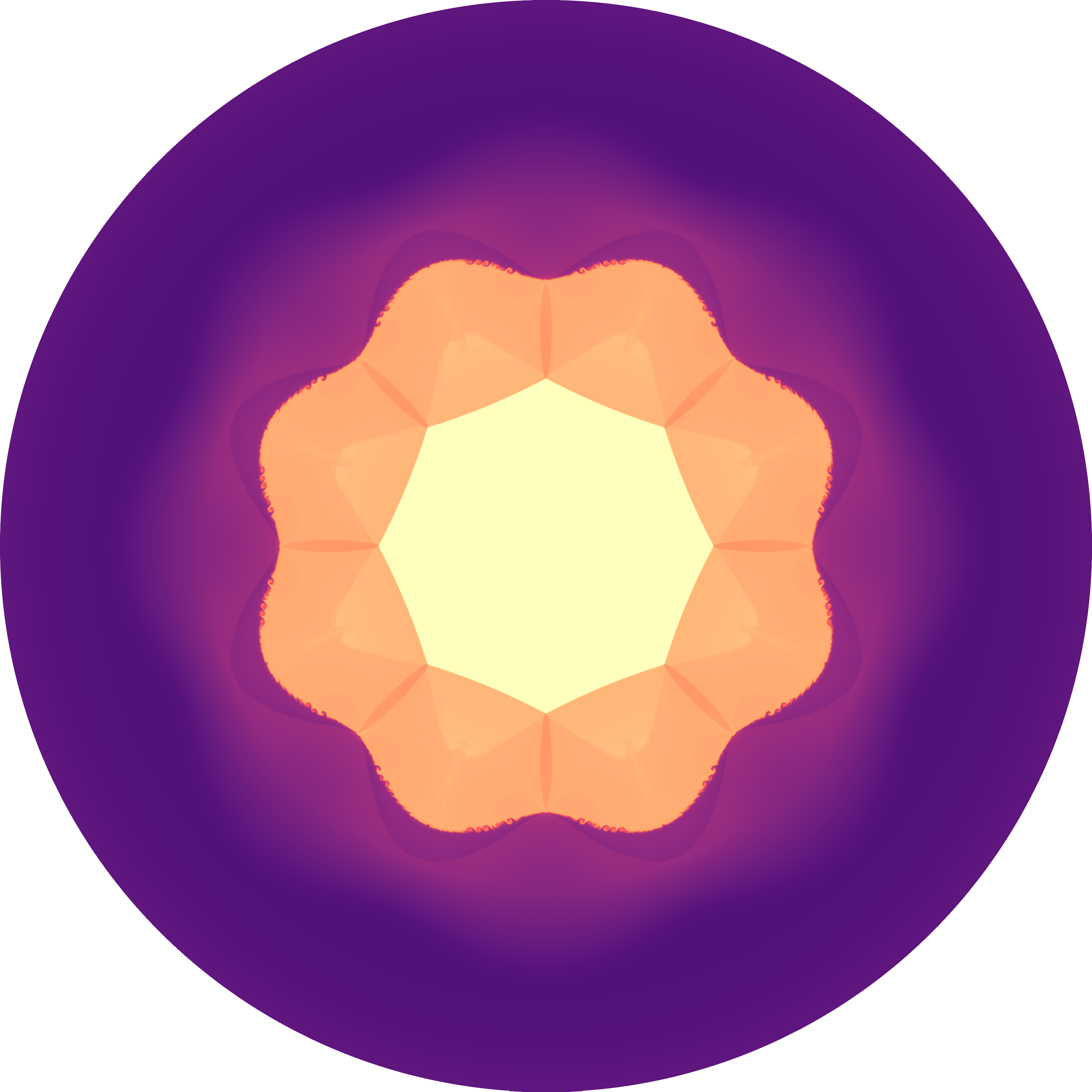}}}
    \hfill
    \subfloat[$t = 0.2s$]{\adjustbox{width=0.49\linewidth, valign=b}{\includegraphics{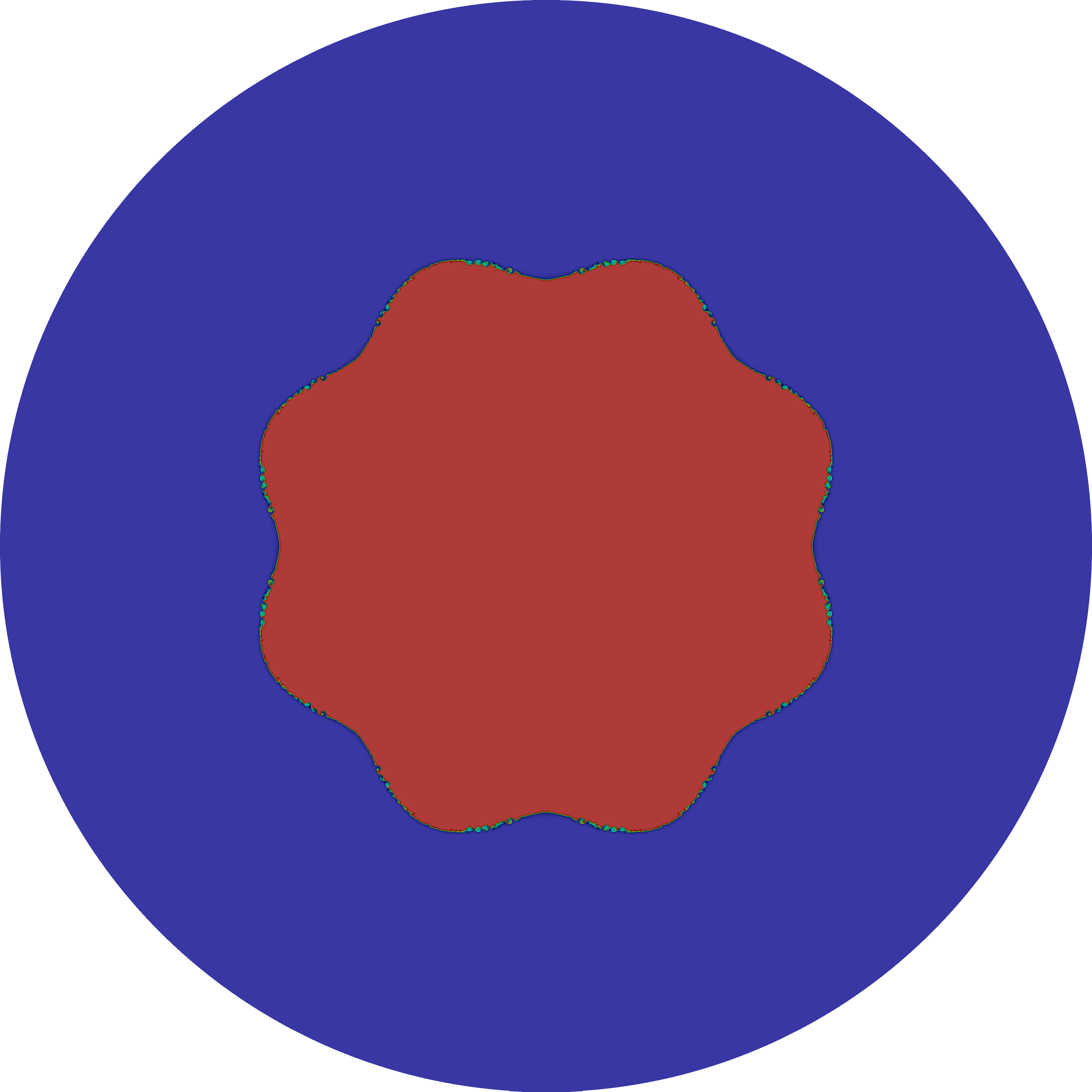}}}
    \newline
    \subfloat[$t = 0.4s$]{\adjustbox{width=0.49\linewidth, valign=b}{\includegraphics{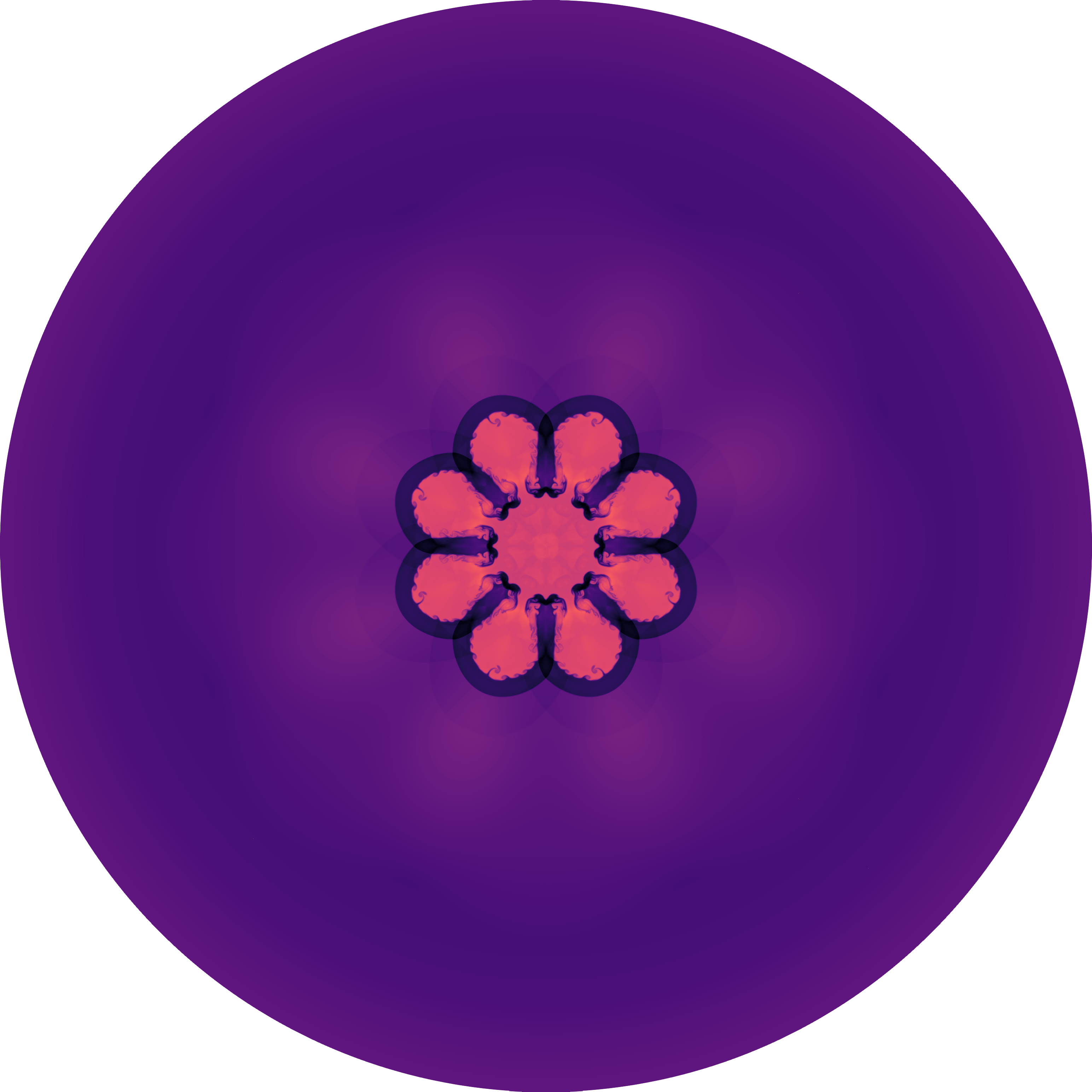}}}
    \hfill
    \subfloat[$t = 0.4s$]{\adjustbox{width=0.49\linewidth, valign=b}{\includegraphics{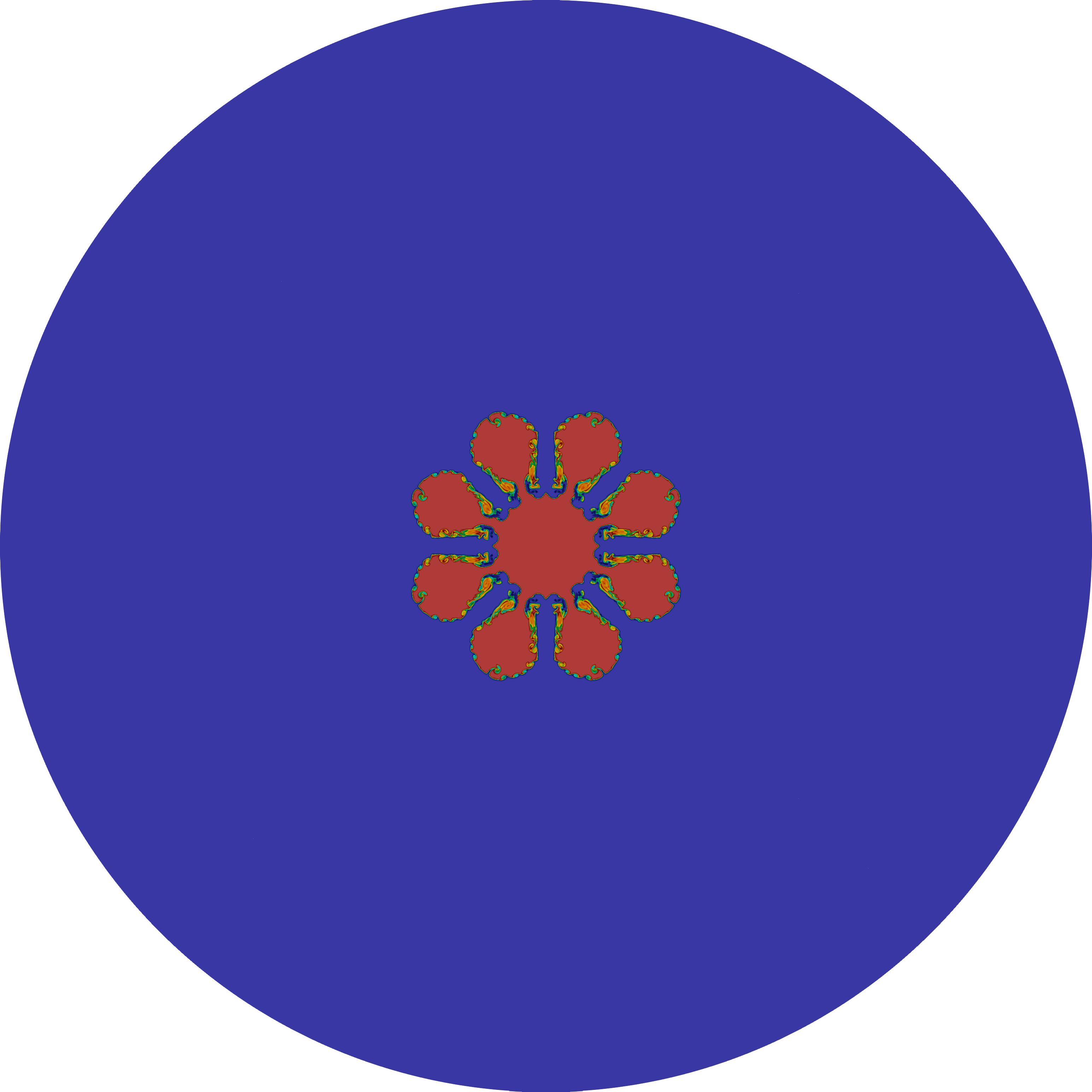}}}
    \newline
    \subfloat[$t = 0.6s$]{\adjustbox{width=0.49\linewidth, valign=b}{\includegraphics{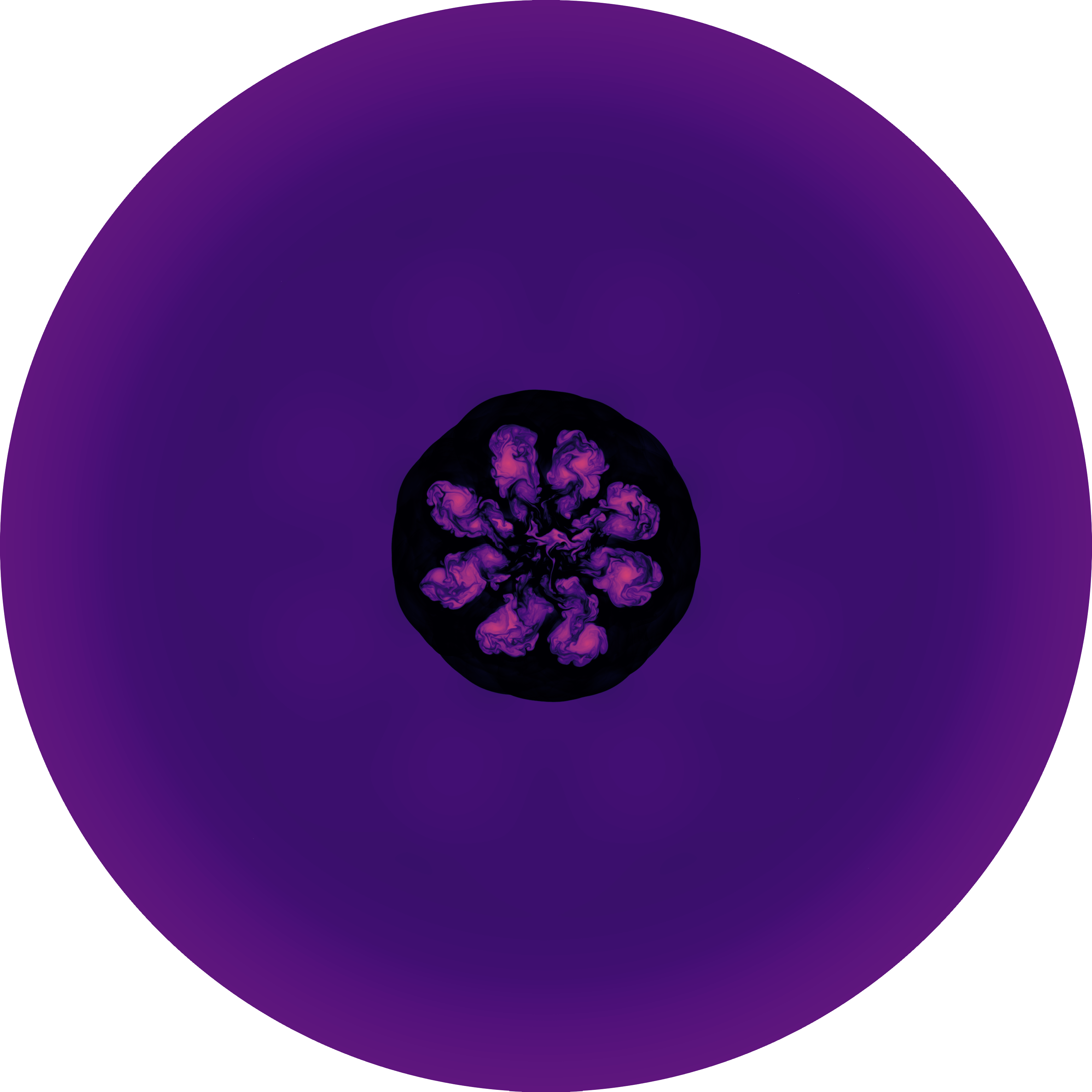}}}
    \hfill
    \subfloat[$t = 0.6s$]{\adjustbox{width=0.49\linewidth, valign=b}{\includegraphics{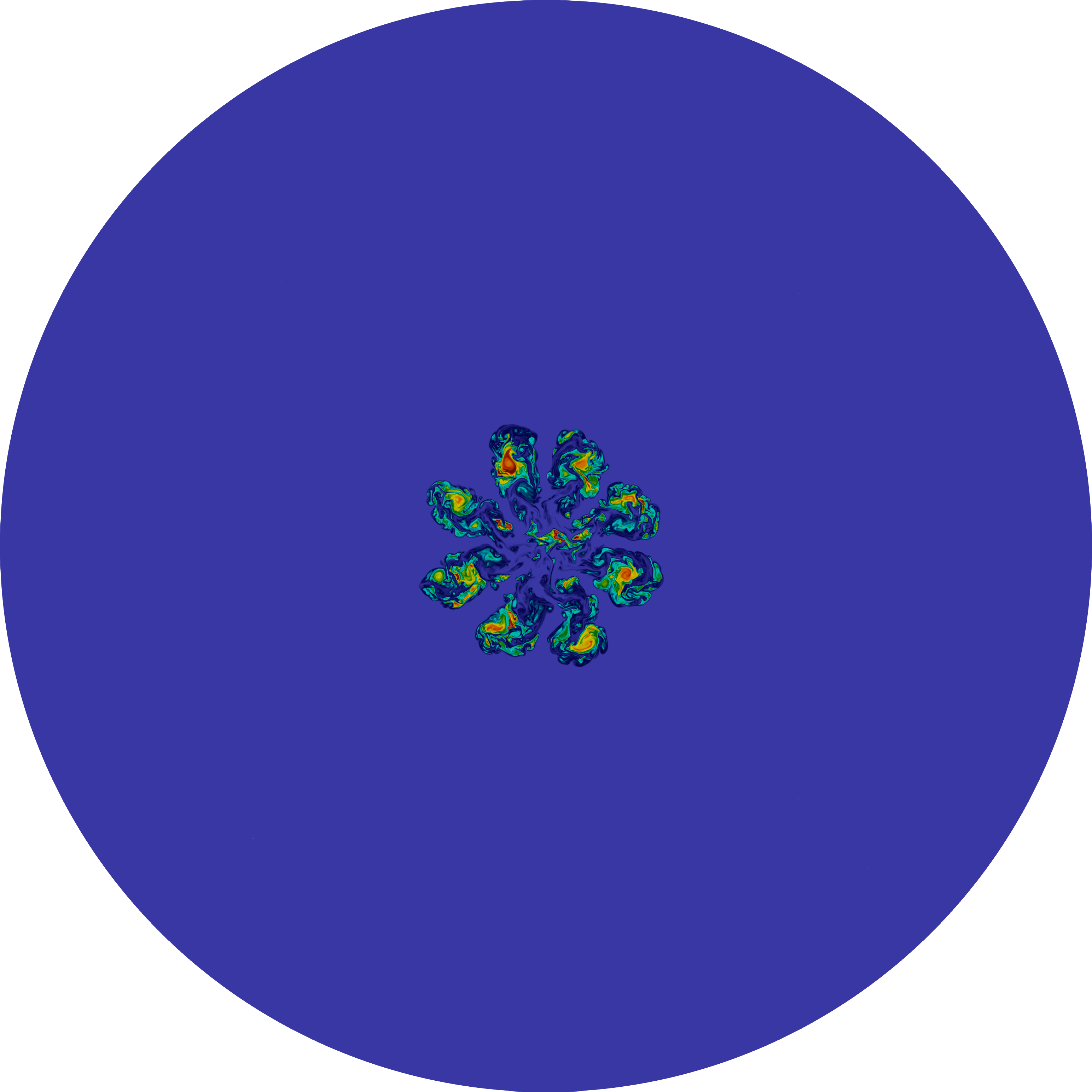}}}
    \caption{2D ICF-like problem -- Contours of density in logarithmic scale (left) and species mass fraction $Y_1$  (right) simulated using 12,587,009 $\polQ^1$ degrees of freedom at varying time intervals. Legend identical to \cref{fig:icf-initial}. }
    \label{fig:icf}
\end{figure}

\subsection{3D -- Axisymmetric triple point shock problem}
We now illustrate the method with a three-dimensional triple point problem which was originally introduced in~\citet[Sec.~8.3]{galera2010two}.
This problem is often used to demonstrate material interface tracking in Lagrangian hydrodynamics as the problem naturally induces vorticity.
Instead of the usual set up, we consder a modification which can be thought of as an ``Eulerian'' extension of the axisymmetric configuration introduced in~\citet[Sec.~4.4]{dobrev2013high}.
Here, the high-pressure ``left'' state is unmodified, the usual high-pressure high-density ``bottom'' state is now rotated about the $x_1$-axis creating a cylinder, and the usual low-pressure low-density ``top'' state acts as the ambient state outside the cylinder.
For clarity, we illustrate this in \cref{fig:3d_triple_point_initial}.

The set up is as follows. We set the parameters for each species by $\gamma_1=\frac{1.4}{1.0}$ and $\gamma_2 =\frac{1.5}{1.0}$ and the initial set up is given by:
\begin{equation}
    \bw(\bx,0) = \begin{cases}
        (0, 1, \mathbf{0}, 1),       & \text{ if } \bx \in I,   \\
        (1, 1, \mathbf{0}, 0.1),     & \text{ if } \bx \in II,  \\
        (1, 0.125, \mathbf{0}, 0.1), & \text{ if } \bx \in III,
    \end{cases}
\end{equation}
where the subregions are defined by $I \eqq \{\bx \in D : x_1 < 1\}$, $II \eqq \{\bx \in D : x_1 \geq 1, \, \sqrt{(x_2 - 1.5)^2 + x_3^2} \leq 0.5\}$, and $III \eqq D \setminus (I \cup II)$.
\begin{figure}[ht!]
    \centering
    \includegraphics[width=0.5\linewidth]{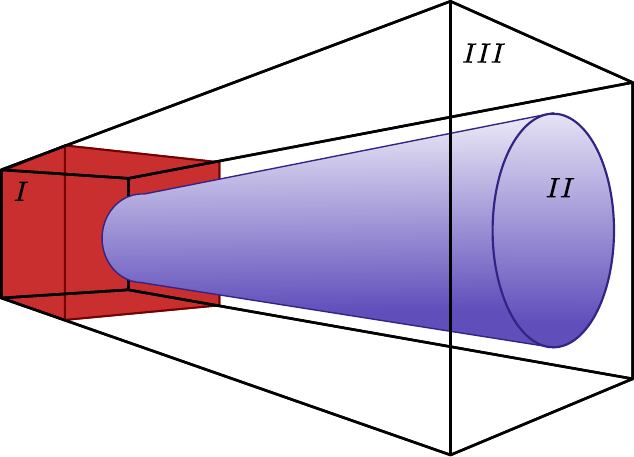}
    \caption{3D triple point problem -- Initial setup.}
    \label{fig:3d_triple_point_initial}
\end{figure}

The computational domain is $D \eqq (0,\SI{7}{m}) \times (0,\SI{3}{m})\times(0, \SI{3}{m})$ with slip boundary conditions on all boundaries. For the sake of spatial resolution demonstration, we run the simulation with two computational meshes. The first mesh, henceforth called the ``coarse mesh'', is composed of 4,114,121 $\polQ_1$ DOFs corresponding to 280 elements in the $x_1$-direction, 120 elements in the $x_2$-direction and 120 elements in the $x_3$-direction.
The second mesh, henceforth called the ``fine mesh'', is composed of of 259,355,681 $\polQ_1$ DOFs corresponding to 1120 elements in the $x_1$-direction, 480 elements in the $x_2$-direction and 480 elements in the $x_3$-direction.
The simulation is run to the final time, $t_{\text{f}} = \SI{3}{s}$.
We give the time snapshots for $t = \{\mynum{1}{s}, \mynum{3}{s}\}$ in~\cref{fig:3d_triple_point_final} for each mesh.
The representation in the figures is as follows. On the $\{x_2 = 1.5\}$ and $\{x_3 = 1.5\}$ planes, we plot mixture density in a logarithmic scale. We then plot the $\{0.5,0.6,0.7,0.8,0.9\}$ iso-volume contours of the species mass fraction $Y_1$ in a solid color. These contours are further cut along the $\{x_2 = 0.8\}$ plane for visualization purposes.
We see that the typical flow features for this problem are well resolved along the shown planes.
\begin{figure}[ht!]
    \centering
    \includegraphics[width=0.496\linewidth]{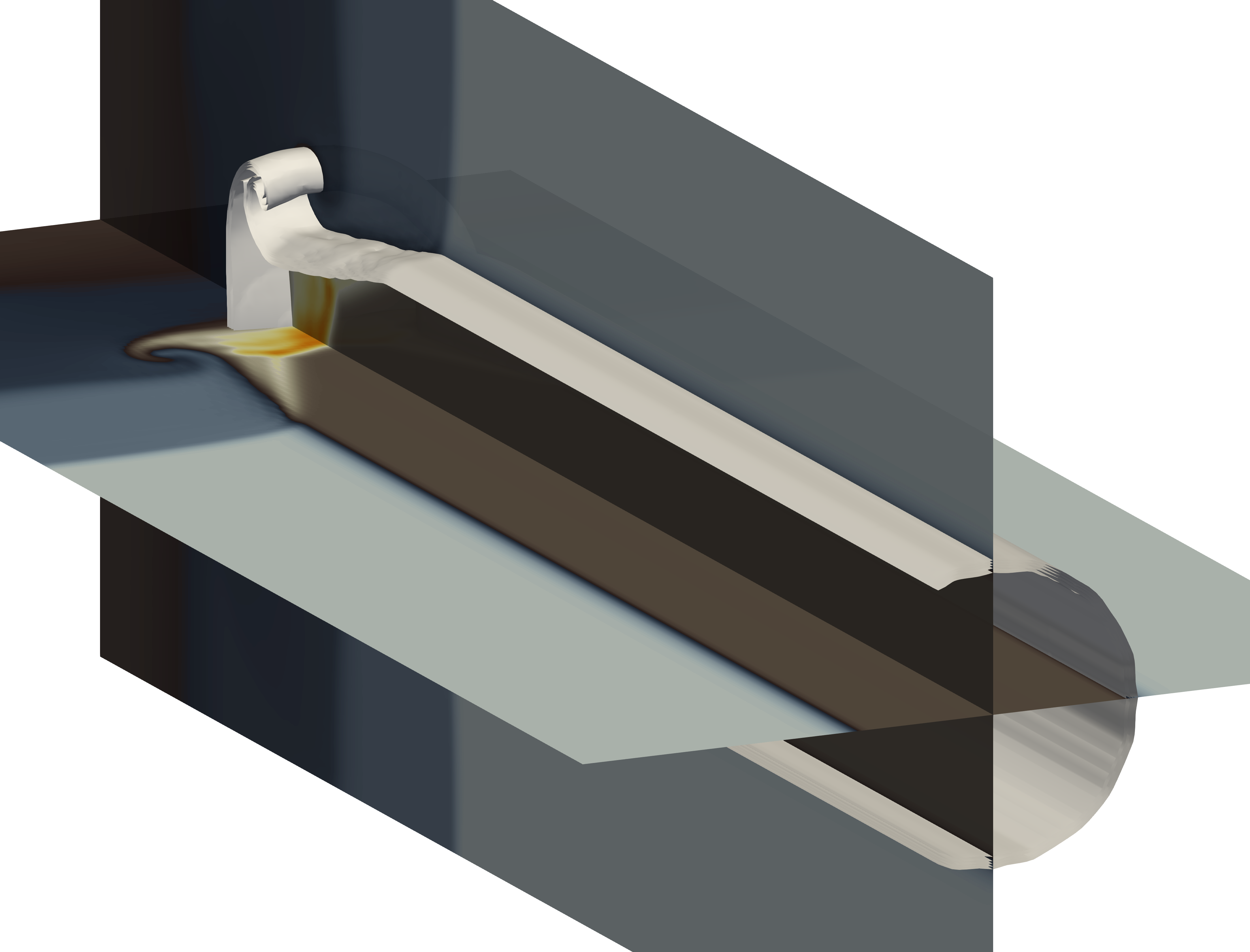}
    \includegraphics[width=0.496\linewidth]{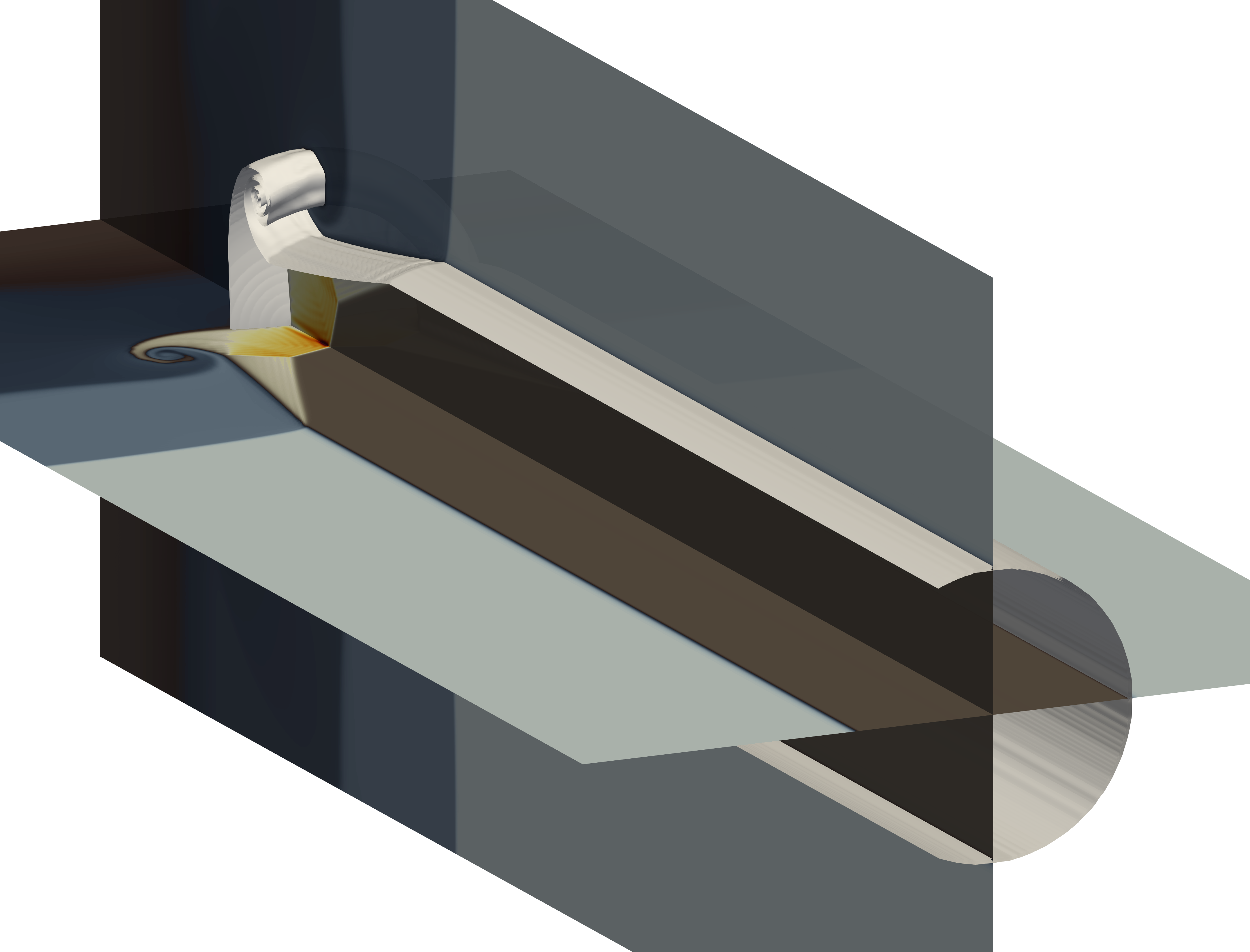}\par
    \includegraphics[width=0.496\linewidth]{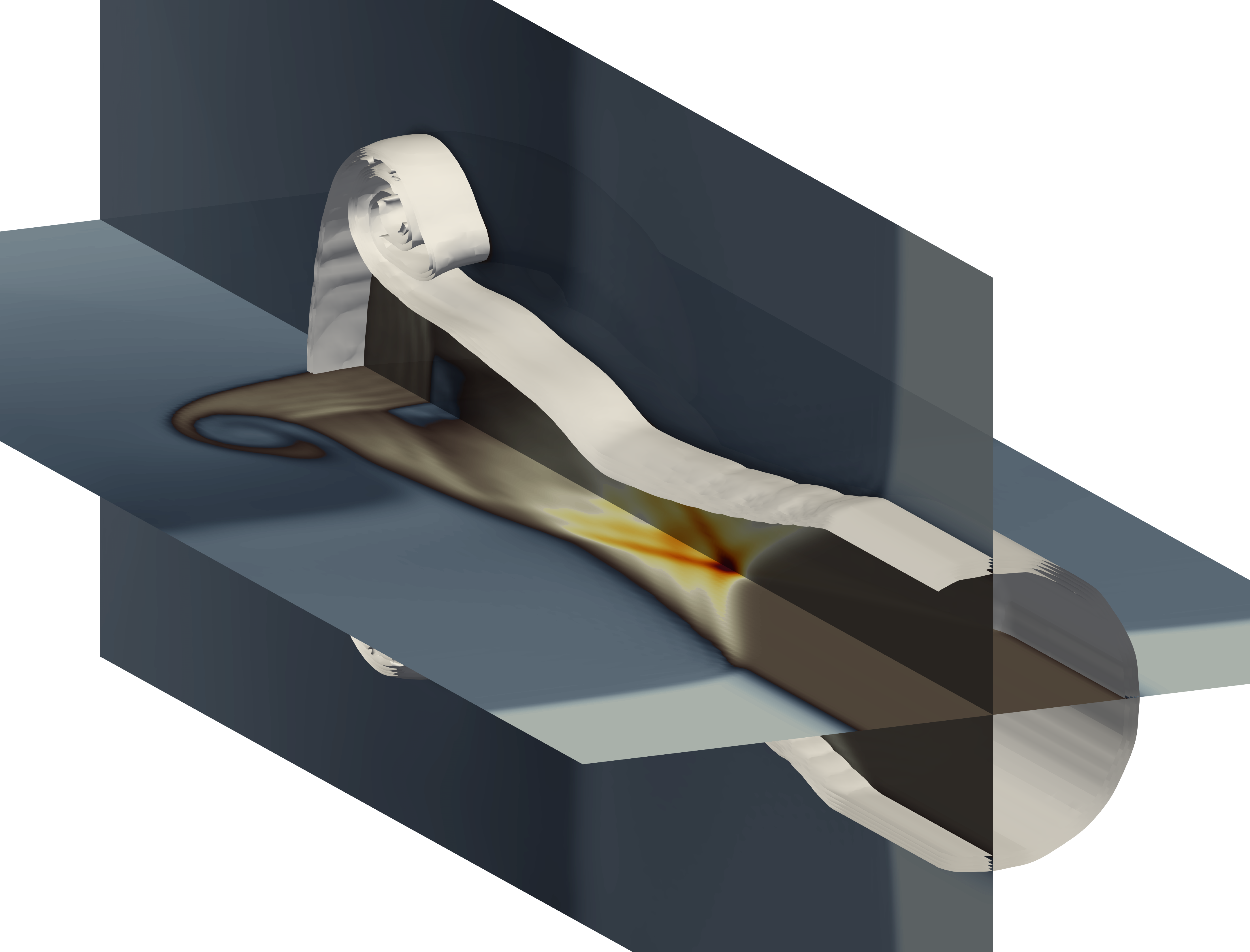}
    \includegraphics[width=0.496\linewidth]{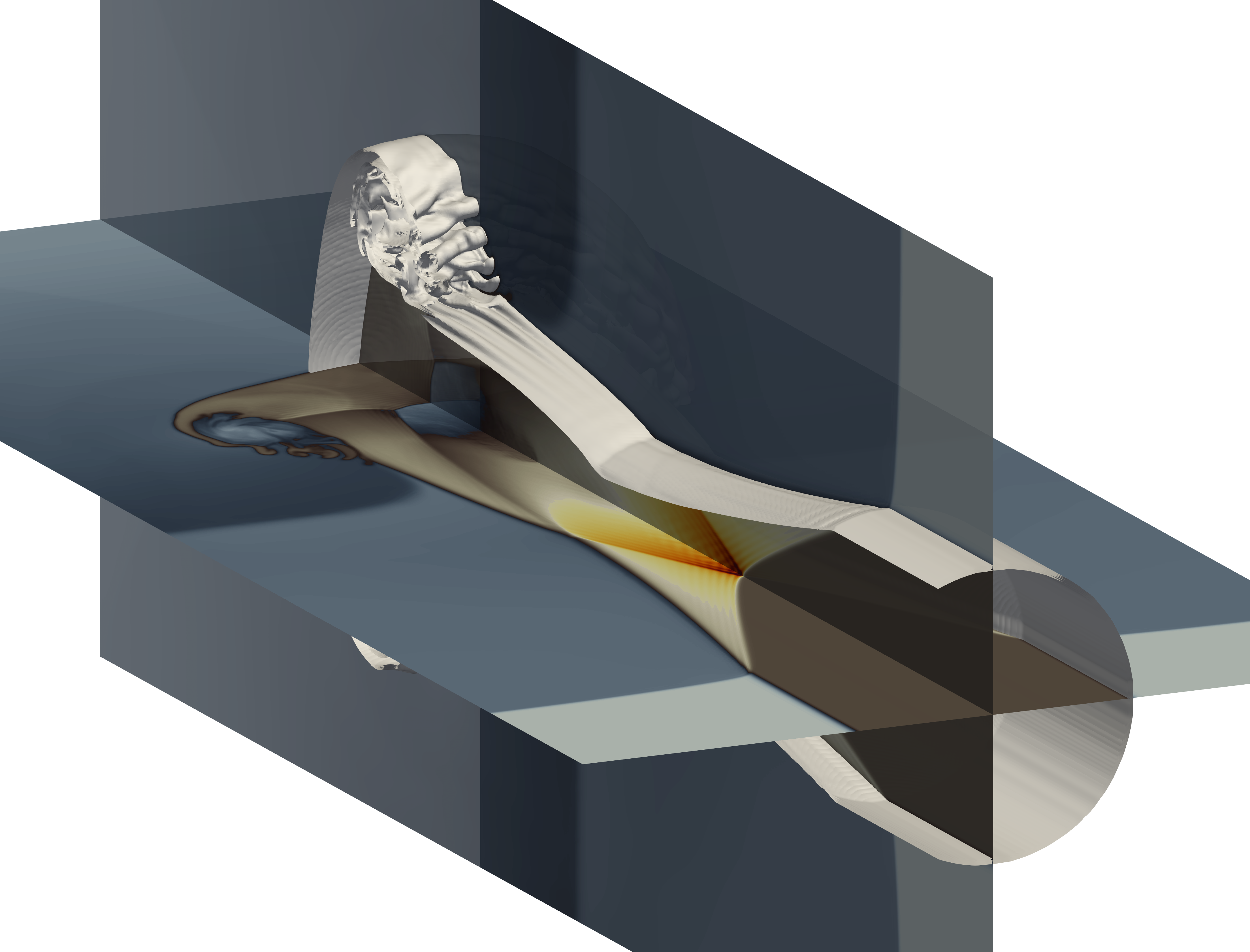}
    \caption{3D triple point problem -- Time snapshots at at $t = \{\mynum{1}{s}, \mynum{3}{s}\}$ with coarse mesh on the left and fine mesh on the right.}
    \label{fig:3d_triple_point_final}
\end{figure}
\section{Conclusion}
This work presents a second-order accurate, invariant-domain preserving numerical scheme for the compressible multi-species Euler equations which ensures positivity of the species densities and internal energy/pressure as well as a local minimum principle on the mixture entropy. We give the solution to the one-dimensional Riemann problem for the multi-species formulation and derive an upper bound on the maximum wave speed of the problem, which we then use to construct a robust, first-order invariant-domain preserving approximation. This approach was then extended to second-order accuracy using a modified convex limiting technique. The numerical results demonstrate the scheme’s ability to handle challenging multi-species flow problems with strong shocks and discontinuities, highlighting its potential for use in high-fidelity simulations of compressible, multi-species flow phenomena. Future work may extend this framework to include viscous effects, more complex equations of state, or reactive flow physics.

\appendix
\section{Thermal-Mechanical Equilibrium}\label{app:PTE}
For the sake of completeness, we show how the assumption of thermal equilibrium, along with Dalton's law, yields the bulk pressure of the system~\eqref{eq:pressure} independent of the pressure equilibrium assumption.
\begin{proposition} \label{prop:daltons_pressure}
    Assume thermal equilibrium holds, $T = T_k = e_k / c_{v,k}$ for all $k \in \intset{1}{n_s}$, and that the pressure is given by Dalton's law $p = \spsum \alpha_k p_k(\rho_k, e_k)$ with $p_k = (\gamma_k - 1) \rho_k e_k$.
    Then 
    \begin{equation}
        p(\rho, e, \bY) = (\gamma(\bY) - 1)\rho e,
    \end{equation}
    where $\gamma(\bY) = c_p(\bY) / c_v(\bY)$, $\rho = \spsum \alpha_k\rho_k$, and $e = \spsum Y_k e_k$.
\end{proposition}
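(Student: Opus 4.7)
The plan is to use Dalton's law to expand the mixture pressure in terms of per-species quantities, then collapse all species-dependent temperatures to a single common temperature using thermal equilibrium, and finally eliminate that temperature using the mixture internal energy identity $e = \sum_k Y_k e_k$. Since each per-species equation of state is ideal, the algebraic identity $\gamma_k - 1 = r_k / c_{v,k}$ is the key lever that does most of the work.

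First I would rewrite each partial density as $\alpha_k \rho_k = \rho Y_k$ (which is definitional, not a consequence of any equilibrium assumption), so that Dalton's law reads $p = \sum_k (\gamma_k - 1)\, \rho Y_k \,e_k$. The thermal equilibrium hypothesis gives $e_k = c_{v,k} T$, and combined with $(\gamma_k - 1) c_{v,k} = r_k$, each summand reduces to $\rho Y_k r_k T$. Pulling out the common factor $\rho T$ yields $p = \rho T \sum_k Y_k r_k = \rho T\, r(\bY)$, where $r(\bY) = c_p(\bY) - c_v(\bY)$.

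To eliminate the temperature I would compute the mass-fraction average of the specific internal energies: $e = \sum_k Y_k e_k = T \sum_k Y_k c_{v,k} = c_v(\bY)\, T$, which inverts to $T = e / c_v(\bY)$. Substituting back and using $\gamma(\bY) - 1 = r(\bY)/c_v(\bY)$ gives $p = \rho e \cdot r(\bY)/c_v(\bY) = (\gamma(\bY) - 1)\,\rho e$, as claimed.

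There is no real obstacle here; the argument is a direct chain of substitutions, and the proposition is essentially an exercise in bookkeeping for ideal-gas mixtures. The one point worth emphasizing in the write-up is that mechanical (pressure) equilibrium $p_k = p_j$ is never invoked: the conversion $\alpha_k \rho_k = \rho Y_k$ is purely algebraic, and thermal equilibrium together with Dalton's law already suffices to recover the mixture pressure law $(\gamma(\bY)-1)\rho e$. This clarifies the claim made in Remark~\ref{rem:mat_quantities} that mechanical equilibrium is only needed to disentangle the individual material densities $\rho_k$, not the bulk pressure.
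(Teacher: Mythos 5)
Your proof is correct and follows essentially the same chain of substitutions as the paper's: expand Dalton's law, replace $e_k$ by $c_{v,k}T$, use $(\gamma_k-1)c_{v,k}=r_k$ and $\alpha_k\rho_k=\rho Y_k$, and eliminate $T$ via $e=c_v(\bY)T$. The only difference is cosmetic ordering of the substitutions, and your closing observation about mechanical equilibrium matches Remark~\ref{rem:mat_quantities}.
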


\begin{proof}
    Using the definition of the pressure, we have the following,
    \begin{equation*}
        p = \spsum\alpha_k p_k =  \spsum \alpha_k (\gamma_k - 1) \rho_k e_k = T \spsum (\gamma_k - 1) \alpha_k \rho_k c_{v,k}.
    \end{equation*}
    Since $c_{v,k} T = e_k$ for every $k \in \intset{1}{n_s}$, then the identity holds for any convex combination of the $n_s$ terms; in particular, the identity holds for mass averaging.
    Hence, $c_v(\bY) T = e$.
    Then using that $\alpha_k\rho_k = \rho Y_k$, we have,
    \begin{equation*}
        p(\rho, e, \bY) = \frac{e}{c_v(\bY)} \spsum\alpha_k \rho_k (c_{p,k} - c_{v,k}) 
        = \rho e \frac{c_p(\bY) - c_v(\bY)}{c_v(\bY)}
        = (\gamma(\bY) - 1)\rho e,
    \end{equation*}
    which completes the proof.
\end{proof}

\bibliographystyle{abbrvnat}
\bibliography{ref}

\end{document}